%
\documentclass[12pt, reqno]{amsart}
\usepackage{amsmath, amsthm, amscd, amsfonts, amssymb, graphicx, color}
\usepackage{subcaption}
\usepackage[bookmarksnumbered, colorlinks, plainpages]{hyperref}
\usepackage{graphicx}
\usepackage{tikz}
\usepackage{mathrsfs}
\usepackage{tkz-euclide}
\usetikzlibrary{decorations.text,calc,arrows.meta}
\hypersetup{colorlinks=true,linkcolor=red, anchorcolor=green,
citecolor=cyan, urlcolor=red, filecolor=magenta, pdftoolbar=true}

\usepackage{tkz-berge}
\usetikzlibrary{3d}

\newcommand\pgfmathsinandcos[3]{%
  \pgfmathsetmacro#1{sin(#3)}%
  \pgfmathsetmacro#2{cos(#3)}%
}

\usepackage{pgfplots}
\textheight 22.5truecm \textwidth 14.5truecm
\setlength{\oddsidemargin}{0.35in}\setlength{\evensidemargin}{0.35in}

\setlength{\topmargin}{-.5cm}

\newtheorem{theorem}{Theorem}[section]
\newtheorem{lemma}[theorem]{Lemma}
\newtheorem{proposition}[theorem]{Proposition}
\newtheorem{corollary}[theorem]{Corollary}
\theoremstyle{definition}
\newtheorem{definition}[theorem]{Definition}
\newtheorem{example}[theorem]{Example}

\newtheorem{conjecture}[theorem]{Conjecture}

\theoremstyle{remark}
\newtheorem{remark}[theorem]{Remark}
\numberwithin{equation}{section}
\usetikzlibrary{calc}
\pgfplotsset{compat=1.16}
\begin{document}

\title[Local approximate symmetry of Birkhoff-James orthogonality ]{Local approximate symmetry of Birkhoff-James orthogonality in normed linear spaces}

\author[J. Chmieli\'{n}ski, D. Khurana and D. Sain]{Jacek Chmieli\'{n}ski, Divya Khurana and Debmalya Sain}

\address[Chmieli\'{n}ski]{Department of Mathematics\\ Pedagogical
University of Krakow\\ Podchor\c{a}\.{z}ych 2, 30-084 Krak\'{o}w\\
Poland} \email{jacek.chmielinski@up.krakow.pl}

\address[Khurana]{Department of Mathematics\\ Indian Institute of Science\\ Bengaluru 560012\\ Karnataka \\India}
\email{divyakhurana11@gmail.com}

\address[Sain]{Department of Mathematics\\ Indian Institute of Science\\ Bengaluru 560012\\ Karnataka \\India}
\email{saindebmalya@gmail.com}

\thanks{}

\subjclass[2010]{Primary 46B20, Secondary 51F20, 52B15, 47L05}

\keywords{Birkhoff-James orthogonality; approximate Birkhoff-James
orthogonality; C-approximate symmetry; D-approximate symmetry}

\begin{abstract}
Two different notions of approximate Birkhoff-James orthogonality in
nor\-med linear spaces have been introduced by Dragomir and
Chmie\-li\'n\-ski. In the present paper we consider a global and a local
approximate symmetry of the Birkhoff-James orthogonality related to
each of the two definitions. We prove that the considered
orthogonality is approximately symmetric in the sense of Dragomir in
all finite-dimensional Banach spaces. For the other case, we prove
that for finite-dimensional polyhedral Banach spaces, the approximate
symmetry of the orthogonality is equivalent to some newly introduced
geometric property. Our investigations complement and extend the
scope of some recent results on a global approximate symmetry of the
Birkhoff-James orthogonality.
\end{abstract}

\maketitle
\section{Introduction}
The Birkhoff-James orthogonality is the most natural and well
studied notion of orthogonality in normed linear spaces. In general,
the Birkhoff-James orthogonality is not symmetric. Chmieli\'{n}ski
and W\'{o}jcik \cite{CW1} introduced a notion of  approximate
symmetry of the Birkhoff-James orthogonality in normed linear
spaces. It should be noted that the authors of \cite{CW1} considered
this notion in the global sense, the meaning of which will be clear
once we present the relevant definition in this section. In this
article, our motivation is to consider the corresponding local
version of the aforesaid concept. We also study the local version of
another standard notion of an approximate Birkhoff-James orthogonality
considered in \cite{D}. The advantage of considering the local
version is illustrated by obtaining some useful conclusions  in the
global case, separately for finite-dimensional polyhedral Banach
spaces and smooth Banach spaces.

Let us first establish the notations and the terminologies to be
used in the present article. Throughout the text, we use the symbols
$X,Y$ to denote real normed linear spaces. Given any two elements
$x,y\in X$, let
$\overline{xy}=\mbox{conv}\{x,y\}=\{(1-t)x+ty:t\in[0,1]\}$ denote
the closed line segment joining $x$ and $y$. By $B_X=\{x\in X:
\|x\|\leq1\}$ and $S_X=\{x\in X: \|x\|=1\}$ we denote the unit ball and
the unit sphere of $X$, respectively, and $B(x,\delta)$ denotes
the open unit ball in $X$  centered at $x$ and with the radius
$\delta>0$. The collection of all extreme points of $B_X$ will be denoted as $\mbox{Ext}\,B_X$.

Let $X^*$ denote the dual space of $X$. Given $0\not=x\in X$, $f\in
S_{X^*}$ is said to be a {\it supporting functional} at $x$ if
$f(x)=\|x\|$. Let $J(x)=\{f\in S_{X^*}: f(x)=\|x\|\}$, $0\not=x\in
X$, denote the collection of all supporting functionals at $x$. Note
that for each $0\not=x\in X$, the Hahn-Banach theorem ensures the
existence of at least one supporting functional at $x$.

An element $x\in S_X$ is said to be a {\it smooth point} if $J(x)=\{ f\}$
for some $f\in S_{X^*}$. Let $\mbox{sm}\,S_X$ denote the collection of all
smooth points of $S_X$. In particular if $\mbox{sm}~S_X=S_X$, then
$X$ is said to be a {\it smooth space}. Let $X$ be a Banach space with a norm $\|\ \|$. For every $\tau>0$, the modulus of smoothness is
defined by
\begin{align*}
\rho(\tau)=\sup\left\{\frac{\|x+\tau y\|+\|x-\tau y\|-2}{2}:\ x,y\in S_X\right\}.
\end{align*}
$(X,\|\ \|)$ is said to be a {\it uniformly smooth space} if
$\lim\limits_{\tau\rightarrow 0}\frac{\rho(\tau)}{\tau}=0$.

Let $X$ be a Banach space with a norm $\|\ \|$. For every
$\varepsilon\in(0,2]$, the modulus of convexity is defined by
\begin{align*}
\delta(\varepsilon)=\inf\left\{1-\frac{\|x+y\|}{2}:\ x,y  \in
B_X,\ \|x-y\|\geq \varepsilon\right\}.
\end{align*}
$(X,\|\ \|)$ is said to be {\it uniformly convex}  if
$\delta(\varepsilon)>0$ for all $\varepsilon\in(0,2]$.

It is well known that a Banach space $(X,\|\ \|)$ is uniformly
smooth if and only if its dual space $(X^*,\|\ \|^*)$ is
uniformly convex (see \cite{H} for more details).

For $x,y\in X$, we say that $x$ is {\it Birkhoff-James orthogonal} to $y$
\cite{B, J1}, written as $x\perp_B y$, if $\|x+\lambda y\|\geq
\|x\|$ for all $\lambda\in \mathbb{R}$. In \cite[Theorem 2.1]{J1},
James proved that if $0\not=x \in X$, $y\in X$, then $x\perp_B y$ if
and only if there exists $f\in J(x)$ such that $f(y)=0$. We will use
the notations $x^\perp=\{y\in X: x\perp_B y\}$ and $^\perp x=\{y\in
X: y\perp_B x\}$. Sain \cite{S1} characterized the Birkhoff-James
orthogonality of linear operators between finite-dimensional Banach
spaces by introducing the notions of the positive part of $x$,
denoted by $x^+$, and the negative part of $x$, denoted by $x^-$,
for an element $x\in X$. For any element $y\in X$, we say that $y\in
x^+$ $(y\in x^-)$ if $\|x+\lambda y\|\geq \|x\|$ for all $\lambda
\geq 0$ $(\lambda \leq 0)$. It is easy to see that $x^\perp=x^+\cap
x^-$.

Dragomir \cite{D} defined  an approximate Birkhoff-James orthogonality
as follows. Let $\varepsilon\in[0,1)$ and let $x,y\in X$; then $x$
is said to be approximately Birkhoff-James orthogonal to $y$ if
$\|x+\lambda y\|\geq (1-\varepsilon) \|x\|$ for all
$\lambda\in\mathbb{R}$. Later on, Chmieli\'{n}ski \cite{C} slightly
modified the definition given by Dragomir as follows. Let
$\varepsilon\in[0,1)$ and let $x,y\in X$. Then $x$ is said to be
approximately Birkhoff-James orthogonal to $y$, written as
$x\perp_D^\varepsilon y$, if and only if $\|x+\lambda y\|\geq
\sqrt{1-\varepsilon^2}\|x\|$ for all $\lambda\in\mathbb{R}$. Due to
this modification, in case of a Hilbert space, the present notion of
the approximate orthogonality coincides exactly with the usual notion of
the $\varepsilon$-orthogonality: $|\langle x,y \rangle|\leq
\varepsilon \|x\|\|y\|$. In \cite[Lemma 3.2]{MSP}, Mal et al. proved
that
\begin{align}\label{Dchar}
x\perp_D^\varepsilon y \quad \Leftrightarrow\quad \exists\, f\in
S_{X^*}:\  |f(x)|\geq \sqrt{1-\varepsilon^2}\|x\|
\mbox{~ and~} f(y)=0.
\end{align}

Chmieli\'{n}ski \cite{C} defined a variation of approximate
Birkhoff-James orthogonality. Given $x,y \in X$ and $\varepsilon\in
[0,1)$, $x$ is said to be approximately orthogonal to $y$, written
as $x\perp_B^\varepsilon y$, if $\|x+\lambda y\|^2\geq
\|x\|^2-2\varepsilon\|x\|\|\lambda y\|$ for all $\lambda \in
\mathbb{R}$. Later, in \cite[Theorems 2.2 and 2.3]{CSW}, Chmieli\'{n}ski et al.
gave two characterizations of this approximate orthogonality:
\begin{equation}\label{Bchar2}
x\perp_B^\varepsilon y \quad \Leftrightarrow \quad \exists\,z\in \mbox{span}\{x,y\}:\  x\perp_B z,\ \mbox{and}\ \|z-y\|\leq\varepsilon \|y\|;
\end{equation}
\begin{equation}\label{Bchar}
x\perp_B^\varepsilon y \quad \Leftrightarrow \quad \exists\,f\in
J(x):\ |f(y)|\leq \varepsilon \|y\|.
\end{equation}

Given $x,y\in X$ and $\varepsilon\in[0,1)$, we will write
$x\perp_{D}^{\varepsilon^*} y$ ($x\perp_{B}^{\varepsilon^*} y$) if
$x\perp_{D}^{\varepsilon} y$ ($x\perp_{B}^{\varepsilon} y$) but
$x\not\perp_D^{\varepsilon_1} y$  ($x\not\perp_B^{\varepsilon_1} y$)
for any $0\leq \varepsilon_1<\varepsilon$.

In general, orthogonality relation between two elements $x,y\in X$
need not be symmetric. In other words, for any two elements $x,y\in
X$, $x\perp_B y$ does not necessarily imply $y\perp_Bx$. James
\cite{J} proved that if dim $X\geq 3$ and the Birkhoff-James
orthogonality is symmetric, then the norm is induced by an inner
product.

In \cite{CW1}, Chmieli\'{n}ski and W\'{o}jcik defined the following
notion of  approximate symmetry of the  Birkhoff-James orthogonality
in a normed linear space.
\begin{definition}\label{C symmetry}
Let $X$ be a normed linear space. Then the Birkhoff-James
orthogonality is {\it approximately symmetric} if there exists
$\varepsilon\in[0,1)$ such that whenever $x,y\in X$ and $x\perp_B
y$, it follows that $y\perp_B^\varepsilon x$.
\end{definition}

The above definition is global in the sense that $\varepsilon$ is
independent of $x$ and $y$.

In this paper we will work with both of the above mentioned notions
of approximate Birkhoff-James orthogonality. To avoid any confusion
we will call the above notion of approximate symmetry an approximate symmetry of the Birkhoff-James orthogonality in the
sense of Chmieli\'{n}ski or shortly: {\it C-approximate symmetry} of the
Birkhoff-James orthogonality.

In \cite{CW1}, the authors gave an example of a Banach space where
the  Birkhoff-James orthogonality is not C-approximately symmetric.
In the present article we will study this example in more detail.
The following definition allows us to study local versions of the
C-approximate symmetry of the Birkhoff-James orthogonality.

\begin{definition}\label{local C}
Let $X$ be a normed linear space and let $x\in X$. We say that $x$
is {\it C-approximately left-symmetric} ({\it C-approximately right-symmetric})
if there exists $\varepsilon_x\in[0,1)$ such that whenever $y\in X$
and $x\perp_B y$ ($y\perp_B x$), it follows that
$y\perp_B^{\varepsilon_x} x$ ($x\perp_B^{\varepsilon_x} y$).

For $\mathcal{A}\subseteq X$ we say that the Birkhoff-James
orthogonality is C-approximately symmetric {\it on}  $\mathcal{A}$ if
there exists $\varepsilon\in[0,1)$ such that whenever $x$,
$y\in\mathcal{A}$ and $x\perp_B y$, it follows that
$y\perp_B^\varepsilon x$.

Let $\mathcal{A}\subseteq X$ and let $x\in S_X$. We say that $x$ is
C-approximately left-symmetric (C-approximately right-symmetric) {\it on}
$\mathcal{A}$ if there exists $\varepsilon_x\in[0,1)$ such that
whenever $y\in\mathcal{A}$ and $x\perp_B y$ ($y\perp_B x$), it
follows that $y\perp_B^{\varepsilon_x}x$
($x\perp_B^{\varepsilon_x}y$).
\end{definition}

Now, with respect to the Dragomir's definition, we define the
following analogous versions of approximate symmetry considered in
Definitions~\ref{C symmetry}~and~\ref{local C}.
\begin{definition}
Let $X$ be a normed linear space. We say that the Birkhoff-James
orthogonality is approximately symmetric in the sense of Dragomir,
shortly: the Birkhoff-James orthogonality is {\it D-approximately
symmetric}, if there exists $\varepsilon\in[0,1)$ such that whenever
$x,y\in X$ and $x\perp_B y$, it follows that $y\perp_D^\varepsilon
x$. For $x\in X$, we define $x$ to be D-approximately left-symmetric
(D-approximately right-symmetric), if there exists
$\varepsilon_x\in[0,1)$ such that whenever $y\in X$ and $x\perp_B y$
($y\perp_B x$), it follows that $y\perp_D^{\varepsilon_x} x$
($x\perp_D^{\varepsilon_x} y$).
\end{definition}

Observe that we can restrict ourselves to norm-one elements by
virtue of the homogeneity of all the notions of orthogonality and
approximate orthogonality introduced here.

To study the C-approximate left-symmetry and the C-approximate
right-symm-
etry of elements of a normed linear space $X$, we define
the following property. We say that the local property (P) holds for
$x\in S_X$ if
\begin{align*}
x^\perp\cap \mathscr{A}(x)=\emptyset,
\end{align*}
where $\mathscr{A}(x)$ is the collection of all those elements $y\in
S_X$ for which given any $f\in J(y)$, either $f$ or $-f$ is in
$J(x)$.

We say that the property (P) holds for a normed linear space $X$  if
the local property (P) holds for each $x\in S_X$, that is,
\begin{equation}
\mbox{for~ all~} x\in S_X:~ \mbox{the~local~ property~ (P)~holds}.
\tag{P}
\end{equation}

\bigskip

If $\mathcal{A}\subseteq S_X$ and $x\in S_X$, then we say that the local
property (P) holds for $x$ {\it on} $\mathcal{A}$ if $x^\perp \cap
\mathscr{A}(x)\cap\mathcal{A}=\emptyset$.

It follows trivially that the local property (P) holds for each
$x\in \mbox{sm}\,S_X$. We will prove that the local property (P) for an
$x\in S_X$ is equivalent to the C-approximate left-symmetry of $x$
in the local sense, that is, the local property (P) holds for $x\in
S_X$ if and only if for $y\in x^\perp\cap S_X$ there exists
$\varepsilon_{x,y}\in[0,1)$ such that $y\perp_B^{\varepsilon_{x,y}}
x$.

To study polyhedral Banach spaces, we recall the following
definitions from \cite{SPBB} which are relevant to our work:

\begin{definition}
Let $X$ be an $n$-dimensional Banach space. A {\it polyhedron} $P$ is a
non-empty compact subset of $X$ which is an intersection of finitely
many closed half-spaces of $X$, that means $P=\cap_{i=1}^r M_i$, where
$M_i$ are closed half-spaces in $X$ and $r\in\mathbb{N}$. The
{\it dimension} of the polyhedron $P$ is defined to be the dimension of
the subspace generated by the differences $x-y$ of vectors $x,y\in
P$.
\end{definition}

An $n$-dimensional Banach space $X$ is said to be a {\it polyhedral
Banach space} if  $B_X$ contains only finitely many extreme points,
or, equivalently, if $S_X$ is a polyhedron.

\begin{definition}
Let $X$ be an $n$-dimensional Banach space. A polyhedron $Q\subseteq
X$ is said to be a {\it face} of the polyhedron $P\subseteq X$ if either
$Q=P$ or if we can write $Q=P\cap \delta M$, where $M$ is a closed
half-space in $X$ containing $P$ and $\delta M$ denotes the boundary
of $M$. If the dimension of $Q$ is $i$, then $Q$ is called an
$i$-face of $P$. $(n-1)$-faces are called {\it facets} of $P$ and
$1$-faces of $P$ are called edges of $P$.
\end{definition}

\begin{definition}
Let $X$ be a finite-dimensional polyhedral Banach space and let $F$
be a facet of the unit ball $B_X$. A functional $f\in S_{X^*}$ is
said to be a {\it supporting functional} corresponding to the facet $F$ of
the unit ball $B_X$ if the following two conditions are satisfied:
\begin{itemize}
\item[(a)] $f$ attains its norm at some point $v$ of $F$.
\item[(b)] $F=(v+ker~f)\cap S_X$.
\end{itemize}
\end{definition}

It is easy to see that there is a unique hyperspace $H$ such that an
affine hyperplane parallel to $H$ contains the facet $F$ of the unit
ball $B_X$. Moreover, there exists a unique norm-one functional $f$,
such that $f$ attains its norm on $F$ and $ker~f=H$. In particular,
$f$ is a supporting functional to $B_X$ at every point of $F$.

Two elements $x,y\in\mbox{Ext}\,B_X$ of an $n$-dimensional polyhedral
Banach space $X$ are said to be {\it adjacent} if $\|tx+(1-t)y\|= 1$ for
all $t\in[0,1]$.

Given normed linear spaces $X,Y$, by $\mathcal{B}(X,Y)$
($\mathcal{K}(X,Y)$) we denote the space of all bounded (compact)
linear operators from $X$ to $Y$. A bounded linear operator $T\in
\mathcal{B}(X,Y)$ is said to {\it attain its norm} at $x\in S_X$ if
$\|Tx\|=\|T\|$. Let $M_T=\{x\in S_X:\|Tx\|=\|T\|\}$ be the
collection of all norm attaining elements of $T$. If $X$ is a
reflexive Banach space and $T\in\mathcal{K}(X,Y)$, then
$M_T\not=\emptyset$ (see \cite{Ba} for details).

The article is organized as follows. In Section 2, we study
the D-approximate symmetry of the Birkhoff-James orthogonality. For this notion the results we are able to obtain are of the highest level of generality. In particular, we prove that in all finite-dimen\-sio\-nal Banach spaces, the Birkhoff-James
orthogonality is always D-appro\-xi\-ma\-te\-ly symmetric.

In Section 3, we study the C-approximate symmetry of the Birkhoff-James
orthogonality. We prove that in finite-dimensional polyhedral Banach
spaces, the C-approximate symmetry of the Birkhoff-James  orthogonality
is equivalent to the local property (P) of all elements of
${\rm Ext}\,B_X$. Apparently, the results in this section are less general than in Section 2. It is caused by the fact that the notion of the C-approximate symmetry essentially differs from the D-approximate one and not all properties remain true. Thus we need to use more subtle methods which usually involve additional assumptions.

In Section 4, we study the C-approximate symmetry of the Birkhoff-James orthogonality for two-dimensional Banach spaces. Even in this case, establishing a satisfactory characterization of the C-approximate symmetry is challenging. To this aim, we introduce a new property, namely property (P1). We show that for any finite-dimensional polyhedral Banach space with property (P1), local property (P) also holds for each element.  We also show that the converse is true for any two-dimensional polyhedral Banach spaces but in general it need not be true. We show that in a two-dimensional regular polyhedral Banach space with $2n$ vertices, where $n\geq 3$, the Birkhoff-James orthogonality is C-approximately
symmetric. We provide an example to show that the regularity
condition in this case cannot be dropped.

\section{D-approximate symmetry of the Birkhoff-James orthogonality}

In \cite{CW1}, Chmieli\'{n}ski and W\'{o}jcik proved that in
uniformly convex Banach spaces and finite-dimensional smooth Banach
spaces, the Birkhoff-James orthogonality is C-approximately
symmetric. Our main aim in this section is to prove that for any
finite-dimensional Banach space, the Birkhoff-James orthogonality is
D-approximately symmetric. To achieve this aim, we first prove the
following results.

\begin{theorem}\label{neighbourhood}
Let $X$ be a normed linear space and let $x,y\in S_X$ with
$x\perp_D^{\varepsilon} y$ for some $\varepsilon \in [0,1)$ . Then
there exist $\varepsilon_1,\varepsilon_2>0$, $\varepsilon_3\in
(0,1)$ such that $z\perp_D^{\varepsilon_3} w$ for all $z\in
B(x,\varepsilon_1)\cap S_X$, $w\in B(y,\varepsilon_2)\cap S_X$.
\end{theorem}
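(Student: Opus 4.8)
The plan is to reformulate the approximate orthogonality $\perp_D^\varepsilon$ in terms of the distance from a point to the line spanned by the second vector, and then show that this distance varies Lipschitz-continuously in both arguments near the unit sphere. For $x,y\in X$ set $d(x,y):=\inf_{\lambda\in\mathbb{R}}\|x+\lambda y\|=\operatorname{dist}(x,\operatorname{span}\{y\})$. For a unit vector $x$ the definition reads precisely $x\perp_D^\varepsilon y\iff d(x,y)\ge\sqrt{1-\varepsilon^2}$, so the hypothesis gives $d(x,y)\ge\sqrt{1-\varepsilon^2}=:c>0$. It therefore suffices to keep $d(z,w)$ bounded below by a fixed positive constant, say $c/2$, for all admissible $z,w$: since then $\inf_\lambda\|z+\lambda w\|\ge (c/2)\|z\|$, choosing $\varepsilon_3$ with $\sqrt{1-\varepsilon_3^2}=c/2\in(0,1)$ (note $c\le 1$, so $c/2<1$) yields $z\perp_D^{\varepsilon_3}w$.

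The first ingredient is the trivial estimate in the first variable: as $d(\cdot,y)$ is the distance function to the set $\operatorname{span}\{y\}$, it is $1$-Lipschitz, whence $|d(z,y)-d(x,y)|\le\|z-x\|$. The second ingredient, continuity in the second variable, is the main obstacle, and the crux is to confine the minimizing scalar to a bounded interval. For $\|z\|\le 1+\varepsilon_1$ and $w\in S_X$, any $\lambda$ with $|\lambda|>2(1+\varepsilon_1)$ gives $\|z+\lambda w\|\ge|\lambda|-\|z\|>1+\varepsilon_1\ge\|z\|\ge d(z,\cdot)$, so such $\lambda$ do not affect the infimum; with $\varepsilon_1\le 1$ one may restrict to the fixed interval $I=[-4,4]$. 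Then, uniformly for $\lambda\in I$, $\|z+\lambda w\|\le\|z+\lambda y\|+|\lambda|\,\|w-y\|$, and taking infima over $I$ gives $d(z,w)\le d(z,y)+4\|w-y\|$; by symmetry, $|d(z,w)-d(z,y)|\le 4\|w-y\|$.

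Combining the two estimates yields $d(z,w)\ge d(x,y)-\|z-x\|-4\|w-y\|\ge c-\varepsilon_1-4\varepsilon_2$ for $z\in B(x,\varepsilon_1)\cap S_X$ and $w\in B(y,\varepsilon_2)\cap S_X$. Choosing $\varepsilon_1,\varepsilon_2>0$ so small that $\varepsilon_1+4\varepsilon_2<c/2$ forces $d(z,w)\ge c/2$ throughout, which completes the argument as indicated above. The only delicate point is the uniform confinement of the minimizer; without it the perturbation of $y$ could not be controlled, since the infimum defining $d$ ranges over all of $\mathbb{R}$. Everything else is routine triangle-inequality bookkeeping, and the argument makes no use of finite-dimensionality. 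One could alternatively argue through the functional characterization \eqref{Dchar}, but producing, for each perturbed $w$, a functional vanishing on $w$ is far less transparent than this direct distance computation, especially outside the finite-dimensional setting.
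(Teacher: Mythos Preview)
Your proof is correct and follows essentially the same route as the paper: perturb the first vector by the triangle inequality, confine the scalar $\lambda$ to a bounded interval (the paper uses $|\lambda|<2$ since $z\in S_X$; your looser bound $|\lambda|\le 4$ is harmless), and then perturb the second vector by the triangle inequality with the bounded $\lambda$. Your packaging of these steps as joint Lipschitz continuity of the distance function $d(z,w)=\inf_\lambda\|z+\lambda w\|$ is a tidy reformulation, but the underlying computation is the same as the paper's.
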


\begin{proof}
Let $\varepsilon_1> 0$ be such that
$\sqrt{1-\varepsilon^2}-\varepsilon_1> 0$. If $z\in
B(x,\varepsilon_1)\cap S_X$, then
\begin{align*}
\|z+\lambda y\|=\|z-x+x+\lambda y\|\geq \|x+\lambda y\|-\|x-z\|\geq
\sqrt{1-\varepsilon^2}-\varepsilon_1.
\end{align*}

Thus for all $z\in B(x,\varepsilon_1)\cap S_X$, we have,
$z\perp_D^{\delta} y$ where $\delta$ is such that
$\sqrt{1-\varepsilon^2}-\varepsilon_1=\sqrt{1-\delta^2}$.

If $|\lambda|\geq 2$, then for any $z_1,z_2\in S_X$, we have,
$\|z_1+\lambda z_2\|\geq |\lambda|-1\geq 1\geq 1-\beta$ for all
$\beta\in [0,1)$.

Choose $\varepsilon_2>0$ such that  $\sqrt{1-\delta
^2}-2\varepsilon_2>0$. Now, if $\lambda\in \mathbb{R}$ with
$|\lambda|< 2$, then for any $z\in B(x,\varepsilon_1)\cap S_X$ and
$w\in B(y,\varepsilon_2)\cap S_X$, we have,
\begin{align*}
\|z+\lambda w\|=\|z+\lambda y+\lambda w-\lambda y\|\geq \|z+\lambda
y\|-|\lambda|\| y-w\|> \sqrt{1-\delta^2}-2\varepsilon_2.
\end{align*}
Thus for all $z\in B(x,\varepsilon_1)\cap S_X$ and $w\in
B(y,\varepsilon_2)\cap S_X$, we have, $z\perp_D^{\varepsilon_3} w$
where  $\varepsilon_3$ is such that
$\sqrt{1-\delta^2}-2\varepsilon_2=\sqrt{1-\varepsilon_3^2}$.
\end{proof}

Our next result shows that given any two linearly independent
elements $x,y\in S_X$ of a normed linear space $X$, we can always
find an $\varepsilon \in[0,1)$ (depending on $x$ and $y$) such that
$x\perp_D^\varepsilon y$.

\begin{proposition}\label{approximate orthogonality}
Let $X$ be a normed linear space and let $x,y\in S_X$ with
$x\not=\pm y$. Then there exists $\varepsilon_{x,y}\in[0,1)$ such
that $x\perp_D^{\varepsilon_{x,y}} y$.
\end{proposition}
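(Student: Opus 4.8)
The plan is to reduce the statement to the positivity of the quantity
\begin{align*}
d := \inf_{\lambda\in\mathbb{R}}\|x+\lambda y\|,
\end{align*}
since $x\perp_D^{\varepsilon}y$ is, by definition, exactly the requirement that $\|x+\lambda y\|\geq\sqrt{1-\varepsilon^2}\|x\|=\sqrt{1-\varepsilon^2}$ for all $\lambda$. Once we know $d>0$, and since clearly $d\leq\|x+0\cdot y\|=1$, we would set $\varepsilon_{x,y}:=\sqrt{1-d^2}\in[0,1)$; then $\|x+\lambda y\|\geq d=\sqrt{1-\varepsilon_{x,y}^2}$ for every $\lambda$, which is precisely $x\perp_D^{\varepsilon_{x,y}}y$.

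First I would observe that the function $g(\lambda):=\|x+\lambda y\|$ is continuous on $\mathbb{R}$ (indeed $1$-Lipschitz, as $\|y\|=1$) and coercive: the reverse triangle inequality gives $g(\lambda)\geq|\lambda|\,\|y\|-\|x\|=|\lambda|-1$, so $g(\lambda)>1=g(0)$ whenever $|\lambda|>2$. Hence the infimum defining $d$ is in fact a minimum, attained at some $\lambda_0\in[-2,2]$ by compactness of the interval together with continuity of $g$. It is worth stressing that this argument uses only the compactness of a bounded interval of the real parameter $\lambda$, and therefore needs no finite-dimensionality assumption on $X$.

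It then remains to check that $d=g(\lambda_0)=\|x+\lambda_0 y\|>0$. If this were zero we would have $x=-\lambda_0 y$; taking norms forces $|\lambda_0|=1$, hence $x=\pm y$, contradicting the hypothesis $x\neq\pm y$. Thus $d>0$, and the reduction of the first paragraph concludes the proof. Alternatively, one could route the argument through the functional characterization \eqref{Dchar}: the Hahn-Banach theorem produces an $f\in S_{X^*}$ vanishing on the line $\mathbb{R}y$ with $f(x)$ equal to the distance $d$ from $x$ to that line, whence $|f(x)|=d\geq\sqrt{1-\varepsilon_{x,y}^2}\|x\|$ yields the same conclusion. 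I do not expect a genuine obstacle here; the only point deserving a moment's care is confirming that the resulting $\varepsilon_{x,y}$ lies in $[0,1)$ rather than merely in $[0,1]$, which is guaranteed precisely by the strict inequality $d>0$ obtained from linear independence.
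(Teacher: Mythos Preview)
Your proof is correct. Both approaches ultimately rest on the linear independence of $x$ and $y$, but they exploit it differently. The paper works on the dual side: it builds a biorthogonal system $\{x,y;x^*,y^*\}$ in $\mathrm{span}\{x,y\}$, normalizes $x^*$ to get $f\in S_{X_0^*}$ with $f(y)=0$ and $f(x)=1/\|x^*\|>0$, extends via Hahn--Banach, and invokes the characterization \eqref{Dchar}. Your argument stays on the primal side, using the definition of $\perp_D^{\varepsilon}$ directly: coercivity and continuity of $\lambda\mapsto\|x+\lambda y\|$ give a minimum $d$ on a compact interval, and linear independence forces $d>0$. Your route is arguably more elementary since it avoids any explicit construction of functionals; the paper's route has the virtue of making the connection to \eqref{Dchar} explicit and is the same idea you sketch in your closing Hahn--Banach remark (indeed $d=1/\|x^*\|$, so the two computations agree). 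Either way the key step is the same strict positivity, and both arguments deliver it cleanly.
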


\begin{proof}
Since $x,y\in S_X$ and $x\not=\pm y$, it follows that $x$, $y$ are
linearly independent. Let $X_0=\mbox{span}\{x,y\}$ and let
$\{x^*,y^*\}\subseteq {X_0^*}$ be such that $\{x, y; x^*,y^*\}$ is a
biorthogonal system in $X_0$, where $x^*(x)=y^*(y)=1,~
x^*(y)=y^*(x)=0$. Now, if we take $f=\frac{x^*}{\|x^*\|}$, then $f\in
S_{X_0^*}$, $f(x)=\frac{1}{\|x^*\|}$ and $f(y)=0$. Let $\hat{f}$ be
a Hahn-Banach extension of $f$ to $X^*$. Then $\hat{f}\in S_{X^*}$,
$\hat{f}(x)=\frac{1}{\|x^*\|}$ and $\hat{f}(y)=0$. If
$\frac{1}{\|x^*\|}> 1$, then for all $\varepsilon\in[0,1)$, we have,
$\hat{f}(x)\geq \sqrt{1-\varepsilon^2}$. If $\frac{1}{\|x^*\|}\leq
1$, then we can find $\varepsilon\in[0,1)$ such that $\hat{f}(x)\geq
\sqrt{1-\varepsilon^2}$. Thus (\ref{Dchar}) implies that for given
$x,y\in S_X$, with $x\not=\pm y$, there exists
$\varepsilon_{x,y}\in[0,1)$ such that $x\perp_D^{\varepsilon_{x,y}}
y$.
\end{proof}

\begin{theorem}\label{theorem approximate symmetric}
Let $X$ be a finite-dimensional Banach space. Then the
Birkhoff-James orthogonality is D-approximately symmetric in $X$.
\end{theorem}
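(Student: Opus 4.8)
The plan is to run a compactness argument over the set of all Birkhoff--James orthogonal pairs lying on the unit sphere, turning the pair-dependent estimate of Proposition \ref{approximate orthogonality} into a uniform one by means of the local stability supplied by Theorem \ref{neighbourhood}. By the homogeneity of both $\perp_B$ and $\perp_D^\varepsilon$ it suffices to produce a single $\varepsilon\in[0,1)$ that works simultaneously for all orthogonal pairs $x,y\in S_X$. First I would record two elementary facts. If $x,y\in S_X$ and $x\perp_B y$, then $x\neq\pm y$ (otherwise $\|x+\lambda x\|\geq\|x\|$ would fail at $\lambda=-1$), so $x$ and $y$ are linearly independent; consequently Proposition \ref{approximate orthogonality}, applied to the pair $(y,x)$, yields some $\varepsilon_0=\varepsilon_{y,x}\in[0,1)$ with $y\perp_D^{\varepsilon_0}x$. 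Secondly, $z\perp_D^{\varepsilon'}w$ implies $z\perp_D^{\varepsilon}w$ whenever $\varepsilon'\leq\varepsilon$, since $\sqrt{1-\varepsilon^2}\leq\sqrt{1-\varepsilon'^2}$.

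Next I would consider the set
\[
K=\{(x,y)\in S_X\times S_X:\ x\perp_B y\}.
\]
Passing to the limit in $\|x_n+\lambda y_n\|\geq\|x_n\|$ and using continuity of the norm shows that $\perp_B$ is a closed relation, so $K$ is a closed subset of the compact set $S_X\times S_X$ (here finite-dimensionality is essential) and is therefore compact. Fix $(x,y)\in K$. Having $y\perp_D^{\varepsilon_0}x$ as above, I apply Theorem \ref{neighbourhood} with the roles of the two vectors interchanged, its ``$x$'' being our $y$ and its ``$y$'' being our $x$: there exist radii $\varepsilon_1(x,y),\varepsilon_2(x,y)>0$ and an exponent $\varepsilon_3(x,y)\in(0,1)$ such that $z\perp_D^{\varepsilon_3(x,y)}w$ for every $z\in B(y,\varepsilon_1(x,y))\cap S_X$ and $w\in B(x,\varepsilon_2(x,y))\cap S_X$. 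The product $B(x,\varepsilon_2(x,y))\times B(y,\varepsilon_1(x,y))$ is then an open neighbourhood of $(x,y)$, and as $(x,y)$ ranges over $K$ these products form an open cover of $K$.

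By compactness I would extract a finite subcover corresponding to pairs $(x_1,y_1),\dots,(x_N,y_N)$, with associated exponents $\varepsilon_3^{(1)},\dots,\varepsilon_3^{(N)}\in(0,1)$, and set $\varepsilon=\max_{1\leq i\leq N}\varepsilon_3^{(i)}<1$. Given any $(x,y)\in K$, it lies in some product neighbourhood, that is $x\in B(x_i,\varepsilon_2^{(i)})\cap S_X$ and $y\in B(y_i,\varepsilon_1^{(i)})\cap S_X$ for some $i$; the conclusion attached to $(x_i,y_i)$, taken with $z=y$ and $w=x$, then gives $y\perp_D^{\varepsilon_3^{(i)}}x$, whence $y\perp_D^{\varepsilon}x$ by the monotonicity noted above. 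Since $\varepsilon<1$ is independent of the chosen pair, this establishes D-approximate symmetry.

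The main obstacle, and the precise place where finite-dimensionality is used, is the compactness of $K$: it is exactly what allows the passage from the pair-dependent exponents of Proposition \ref{approximate orthogonality} to a single uniform $\varepsilon<1$. The only other point requiring care is the bookkeeping of the interchanged roles in Theorem \ref{neighbourhood}, so that the neighbourhood of the left vector $x$ is matched with the ``$w$''-slot and the neighbourhood of $y$ with the ``$z$''-slot; once this matching is set up correctly, the rest is a routine finite-cover argument.
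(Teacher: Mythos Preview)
Your proof is correct and follows essentially the same approach as the paper: both combine Proposition~\ref{approximate orthogonality} with the local stability of Theorem~\ref{neighbourhood} and the compactness of $S_X$ to upgrade the pair-dependent $\varepsilon_{x,y}$ to a uniform $\varepsilon<1$. The only difference is cosmetic: the paper argues by contradiction via sequential compactness (assuming $\sup\varepsilon^*_{x,y}=1$ and extracting convergent subsequences), whereas you give the equivalent direct argument using a finite subcover of the compact set $K$.
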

\begin{proof}
Let $x\in S_X$ and let $y\in x^\perp\cap S_X$. Then by
Proposition~\ref{approximate orthogonality}, there exists
$\varepsilon_{x,y}\in [0,1)$ such that
$y\perp_D^{\varepsilon_{x,y}} x$. Let $\varepsilon^*_{x,y}$ be the
infimum of all such $\varepsilon_{x,y}$.  We claim that
$\varepsilon:=\underset{x\in S_X}{\sup}~\underset{y\in x^\perp\cap
S_X}{\sup} \varepsilon^*_{x,y}<1$. If $\varepsilon= 1$, then we can
choose $\{x_n\}$, $\{y_n\}\subseteq S_X$, $\varepsilon_n\nearrow 1$
such that $x_n\perp_B y_n$ and $y_n\perp_{D}^{\varepsilon_n^*}x_n$.
Since $S_X$ is compact, there exist convergent sub-sequences of
$\{x_n\}$, $\{y_n\}$ which we again denote by $\{x_n\}$ and
$\{y_n\}$, respectively. Let $x_0$, $y_0\in S_X$ be such that
$x_n\longrightarrow x_0$ and $y_n\longrightarrow y_0$. Then by
continuity of the norm it follows that $y_0\in x_0^\perp\cap S_X$.
Now from Proposition~\ref{approximate orthogonality}, it follows
that $y_0\perp_{D}^{\varepsilon_0} x_0$ for some $\varepsilon_0 \in
[0,1)$. Using Theorem~\ref{neighbourhood}, we can find
$\varepsilon_1$, $\varepsilon_2>0$ and $\varepsilon_3\in(0,1)$ such
that $w\perp_{D}^{\varepsilon_3}z$ for all $z\in
B(x_0,\varepsilon_1)\cap S_X$ and $w\in B(y_0,\varepsilon_2)\cap
S_X$. Thus we can find $m\in\mathbb{N}$ such that
$y_n\perp_{D}^{\varepsilon_3}x_n$ for all $n\geq m$. This leads to a
contradiction as $y_n\perp_{D}^{\varepsilon_n^*}x$ for
$\varepsilon_n\nearrow 1$. Thus $\varepsilon<1$ and the
Birkhoff-James orthogonality is D-approximately symmetric in $X$.
\end{proof}

\begin{remark}
It follows from the above theorem that each element
of a finite-dimensional Banach space is both D-approximately
left-symmetric and D-appro\-xi\-ma\-te\-ly right-symmetric.
\end{remark}

We will use the following result from \cite{SPM2} in the proof of
the next result.

\begin{theorem}\cite[Theorem 2.1]{SPM2}\label{operator dragomir}
Let $X$ be a reflexive Banach space and let $Y$ be a normed linear
space. Let $T,A\in\mathcal{K}(X,Y)$ with $\|T\|=\|A\|=1$. Then
$T\perp_D^\varepsilon A$ for $\varepsilon\in[0,1)$ if and only if
either $(a)$ or $(b)$ holds.
\begin{itemize}
\item[(a)] There exists $x\in M_T$ such that $Ax\in(Tx)^+$ and for
each $\lambda\in(-1-\sqrt{1-\varepsilon^2},
-1+\sqrt{1-\varepsilon^2} )$, there exists  $x_\lambda\in S_X$ such
that $\|Tx_\lambda+\lambda Ax_\lambda\|\geq \sqrt{1-\varepsilon^2}$.

\item[(b)] There exists $y\in M_T$ such that $Ay\in(Ty)^-$ and for
each $\lambda\in(1-\sqrt{1-\varepsilon^2}, 1+\sqrt{1-\varepsilon^2}
)$, there exists  $y_\lambda\in S_X$ such that $\|Ty_\lambda+\lambda
Ay_\lambda\|\geq \sqrt{1-\varepsilon^2}$.
\end{itemize}
\end{theorem}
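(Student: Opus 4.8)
The plan is to argue directly from the definition, treating $T\perp_D^\varepsilon A$ (with $\|T\|=1$) as the single assertion that $g(\lambda):=\|T+\lambda A\|\ge\sqrt{1-\varepsilon^2}$ for every $\lambda\in\mathbb{R}$. The function $g$ is convex and continuous with $g(0)=1$, and the triangle inequality gives $g(\lambda)\ge\bigl|\,|\lambda|-1\,\bigr|$; consequently the ``bad set'' $\{\lambda:g(\lambda)<\sqrt{1-\varepsilon^2}\}$ is forced into $(-1-\sqrt{1-\varepsilon^2},\,-1+\sqrt{1-\varepsilon^2})\cup(1-\sqrt{1-\varepsilon^2},\,1+\sqrt{1-\varepsilon^2})$. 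This localization is exactly why the two intervals in clauses $(a)$ and $(b)$ appear, and it makes the converse direction essentially automatic. Indeed, assuming $(a)$, the hypotheses $x\in M_T$ and $Ax\in(Tx)^+$ give $g(\lambda)\ge\|Tx+\lambda Ax\|\ge\|Tx\|=1$ for all $\lambda\ge0$; for $\lambda<0$ with $|\lambda|\le 1-\sqrt{1-\varepsilon^2}$ or $|\lambda|\ge 1+\sqrt{1-\varepsilon^2}$ the triangle-inequality bound already yields $g(\lambda)\ge\sqrt{1-\varepsilon^2}$, and the remaining $\lambda\in(-1-\sqrt{1-\varepsilon^2},\,-1+\sqrt{1-\varepsilon^2})$ are covered precisely by the second clause of $(a)$. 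Hence $g\ge\sqrt{1-\varepsilon^2}$ everywhere, i.e. $T\perp_D^\varepsilon A$; the case $(b)$ is the mirror image using $\lambda\le0$ and $Ay\in(Ty)^-$.

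The substance lies in the forward direction, and my first step would be a structural dichotomy valid for \emph{any} compact $T,A$ on a reflexive $X$: either there is $x\in M_T$ with $Ax\in(Tx)^+$, or there is $y\in M_T$ with $Ay\in(Ty)^-$. I would read this off the one-sided derivatives of $g$ at $0$. Choose $\lambda_n\downarrow0$; since $T+\lambda_n A$ is compact and $X$ is reflexive, $\|T+\lambda_n A\|$ is attained at some $x_n\in S_X$, and I pick $f_n\in S_{Y^*}$ with $f_n(Tx_n+\lambda_n Ax_n)=\|Tx_n+\lambda_n Ax_n\|$. By reflexivity (Eberlein--\v{S}mulian) pass to $x_n\rightharpoonup x$, by compactness of $T$ and $A$ obtain $Tx_n\to Tx$ and $Ax_n\to Ax$ in norm, and by Banach--Alaoglu assume $f_n$ converges weak-$*$ to some $g_0\in B_{Y^*}$ (passing to a subnet if necessary). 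Letting $n\to\infty$ in $f_n(Tx_n+\lambda_n Ax_n)=\|T+\lambda_n A\|\to1$ yields $g_0(Tx)=\|Tx\|=1$, so $\|x\|=1$, $x\in M_T$, and $g_0\in J(Tx)$. The extra ingredient is a convex-analytic one: the affine map $\lambda\mapsto f_n(Tx_n+\lambda Ax_n)$ lies below $g$ and touches it at $\lambda_n$, so its slope $f_n(Ax_n)$ is a subgradient, $f_n(Ax_n)\ge g'_-(\lambda_n)\ge g'_+(0)$. Thus if $g'_+(0)\ge0$ then $g_0(Ax)=\lim f_n(Ax_n)\ge0$, whence $\|Tx+\mu Ax\|\ge g_0(Tx)+\mu g_0(Ax)\ge1$ for $\mu\ge0$, i.e. $Ax\in(Tx)^+$. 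If instead $g'_+(0)<0$, then $g'_-(0)\le g'_+(0)<0$ and the symmetric construction with $\lambda_n\uparrow0$ produces $y\in M_T$ with $Ay\in(Ty)^-$.

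With the dichotomy in hand the forward direction of Theorem~\ref{operator dragomir} finishes quickly: whichever branch holds, the accompanying interval clause is free, because $T\perp_D^\varepsilon A$ gives $\|T+\lambda A\|\ge\sqrt{1-\varepsilon^2}$ for every $\lambda$, and this supremum is attained (again by reflexivity and compactness) at some $x_\lambda\in S_X$, so $\|Tx_\lambda+\lambda Ax_\lambda\|\ge\sqrt{1-\varepsilon^2}$. Hence $(a)$ holds in the first branch and $(b)$ in the second. I expect the genuine obstacle to be the limiting argument of the middle paragraph: guaranteeing that the weak limit $x$ has norm one and belongs to $M_T$ (this is where compactness of $T$ is indispensable, since a bare weak limit may shrink the norm), and, more delicately, preserving the sign of $f_n(Ax_n)$ in the limit. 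The observation that $f_n(Ax_n)$ is a subgradient of the convex function $g$ at $\lambda_n$, together with monotonicity of the derivative, is precisely the device that lets the positivity survive; getting this sign bookkeeping correct is the crux of the proof.
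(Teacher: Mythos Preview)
The paper does not prove this statement: Theorem~\ref{operator dragomir} is quoted verbatim from \cite[Theorem~2.1]{SPM2} and is used only as a tool in the proof of Theorem~\ref{operator orthogonality}. There is therefore no in-paper proof to compare against.

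That said, your proposal is a complete and correct argument. The sufficiency direction is handled cleanly via the triangle-inequality localization $g(\lambda)\ge\bigl||\lambda|-1\bigr|$, which is exactly the right observation to explain why the two intervals in $(a)$ and $(b)$ are the only places anything needs to be checked. For necessity, your convex-analytic dichotomy is the heart of the matter: the observation that $f_n(Ax_n)$ is a subgradient of the convex function $g$ at $\lambda_n$, together with monotonicity of subgradients, is precisely what is needed to carry the sign of $g'_+(0)$ through the limit and land in $(Tx)^+$ or $(Ty)^-$. The limiting steps (weak compactness of $B_X$ via reflexivity, norm convergence of $Tx_n$ and $Ax_n$ via compactness of $T$ and $A$, weak-$*$ compactness of $B_{Y^*}$) are all invoked correctly, and you have flagged the one genuinely delicate point---that the weak limit $x$ stays on the sphere and in $M_T$---and resolved it. One cosmetic remark: you mix a subsequence for $(x_n)$ with a subnet for $(f_n)$; this is harmless since the norm convergences $Tx_n\to Tx$, $Ax_n\to Ax$ persist along any subnet, but it is worth saying so explicitly.
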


Let $X$ be a reflexive Banach space and let $Y$ be a normed linear
space. Let $T,A\in S_{\mathcal{K}(X,Y)}$ be such that $T\perp_B
A$. Then by Proposition~\ref{approximate orthogonality}, there exists
$\varepsilon\in[0,1)$ such that $A\perp_D^\varepsilon T$. We now
estimate the infimum of such $\varepsilon$'s.

\begin{theorem}\label{operator orthogonality}
Let $X$ be a reflexive Banach space and $Y$ a normed linear
space. Suppose that $T,A\in\mathcal{K}(X,Y)$ with $\|T\|=\|A\|=1$ and that the set
$\mathcal{A}=\{x\in S_X: Tx\not=\lambda
Ax~\mbox{for~all}~\lambda\in\mathbb{R}\}$ is nonempty. If $A
\perp_B T$, then $T\perp_D^\varepsilon A$, where
$\sqrt{1-\varepsilon^2}=\sup_{x\in
\mathcal{A}}\inf_{\lambda\in\mathbb{R}}\|Tx+\lambda Ax\|$.
\end{theorem}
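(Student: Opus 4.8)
The plan is to unfold the definition of $\perp_D^\varepsilon$ at the operator level and reduce the assertion to a minimax identity for the scalar quantity $\inf_{\lambda}\|T+\lambda A\|$. Since $\|T\|=1$, the relation $T\perp_D^\varepsilon A$ means precisely that $\|T+\lambda A\|\geq\sqrt{1-\varepsilon^2}$ for every $\lambda\in\mathbb{R}$, i.e. $\inf_{\lambda}\|T+\lambda A\|\geq\sqrt{1-\varepsilon^2}$. With $\varepsilon$ defined by $\sqrt{1-\varepsilon^2}=\sup_{x\in\mathcal{A}}\inf_{\lambda}\|Tx+\lambda Ax\|=:c$, the theorem therefore amounts to the estimate $\inf_{\lambda}\|T+\lambda A\|\geq c$, and the sharpness of $\varepsilon$ (its being the infimum) amounts to the reverse inequality. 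Before anything else I would record that $c\in(0,1]$, so that $\varepsilon\in[0,1)$: the bound $c\leq 1$ is immediate from $\inf_{\lambda}\|Tx+\lambda Ax\|\leq\|Tx\|\leq 1$, while $c>0$ follows since for $x\in\mathcal{A}$ the vectors $Tx,Ax$ are linearly independent, so the affine line $\{Tx+\mu Ax:\mu\in\mathbb{R}\}$ avoids the origin and stays at positive distance from it. The hypothesis $A\perp_B T$ places the statement in the intended setting of approximate symmetry: a forward Birkhoff--James orthogonality producing a quantitative approximate reverse one.

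The easy inequality $\inf_{\lambda}\|T+\lambda A\|\geq c$ requires no more than restricting the operator-norm supremum to the smaller set $\mathcal{A}\subseteq S_X$. Fixing $\lambda\in\mathbb{R}$ and any $x\in\mathcal{A}$, one has
\[
\|T+\lambda A\|=\sup_{z\in S_X}\|Tz+\lambda Az\|\geq\|Tx+\lambda Ax\|\geq\inf_{\mu\in\mathbb{R}}\|Tx+\mu Ax\|.
\]
Taking the supremum over $x\in\mathcal{A}$ and then the infimum over $\lambda$ yields $\inf_{\lambda}\|T+\lambda A\|\geq c$, which already establishes $T\perp_D^\varepsilon A$.

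For the reverse inequality $\inf_{\lambda}\|T+\lambda A\|\leq c$ — where the real work lies — I would exploit that $\lambda\mapsto\|T+\lambda A\|$ is convex and coercive (it dominates $|\lambda|-1$), hence attains its minimum at some $\lambda_0$. Minimality at $\lambda_0$ is exactly the statement that the compact operator $B:=T+\lambda_0 A$ satisfies $B\perp_B A$, with $\|B\|=\inf_{\lambda}\|T+\lambda A\|$. Since $\mathcal{A}\neq\emptyset$ forces $T$ and $A$ to be linearly independent, $\|B\|>0$. Because $X$ is reflexive and $B$ is compact, I would invoke the known norm-attainment characterization of Birkhoff--James orthogonality for operators to obtain $x_0\in M_B$ with $Bx_0\perp_B Ax_0$.

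The two remaining points are the crux. First, $x_0\in\mathcal{A}$: as $\|Bx_0\|=\|B\|>0$, the vector $Bx_0$ is nonzero, and it cannot be a scalar multiple of $Ax_0$, since $Bx_0=\alpha Ax_0$ with $\alpha\neq 0$ together with $Bx_0\perp_B Ax_0$ would give $Bx_0\perp_B Bx_0$, forcing $Bx_0=0$; hence $Tx_0=Bx_0-\lambda_0 Ax_0$ is not a multiple of $Ax_0$, i.e. $x_0\in\mathcal{A}$. Second, $Bx_0\perp_B Ax_0$ unfolds to $\|Tx_0+(\lambda_0+\mu)Ax_0\|=\|Bx_0+\mu Ax_0\|\geq\|Bx_0\|$ for all $\mu$, so $\inf_{\mu}\|Tx_0+\mu Ax_0\|=\|Bx_0\|=\|B\|$. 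Consequently $c\geq\inf_{\mu}\|Tx_0+\mu Ax_0\|=\inf_{\lambda}\|T+\lambda A\|$, which gives the reverse inequality and the equality $\inf_{\lambda}\|T+\lambda A\|=c=\sqrt{1-\varepsilon^2}$. I expect the main obstacle to be precisely this reverse step: producing a single norm-attaining point $x_0$ at which the operator orthogonality transfers to the scalar infimum, which is what licenses pulling the global minimum $\inf_{\lambda}\|T+\lambda A\|$ down to a value realized inside the supremum defining $c$.
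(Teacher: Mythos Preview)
Your proof of the theorem is correct, and the forward estimate $\inf_{\lambda}\|T+\lambda A\|\geq c$ in your second paragraph is all the statement actually demands: once $c\in(0,1]$ is established (your first paragraph), the relation $T\perp_D^{\varepsilon}A$ is precisely this inequality. Your route is more elementary than the paper's. The paper proves the same inequality by invoking the characterization of $\perp_D^{\varepsilon}$ for compact operators (Theorem~\ref{operator dragomir}), which in turn needs $M_T\neq\emptyset$ and the $(Tx)^{\pm}$ dichotomy; you bypass all of this by the one-line restriction $\|T+\lambda A\|\geq\|Tx_0+\lambda Ax_0\|$ for $x_0\in\mathcal{A}$. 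Neither argument uses the hypothesis $A\perp_B T$, which---as you observe---is present only to frame the result as an approximate-symmetry statement.

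Your third paragraph, however, aims at something the theorem does not assert. The reverse inequality $\inf_{\lambda}\|T+\lambda A\|\leq c$ (equivalently, that $\varepsilon$ is the optimal constant) is not part of the statement; the paper comments on optimality only in the subsequent remark, and there it means best among the estimates produced by the method, not the absolute infimum. More importantly, your argument for it has a gap: from $B\perp_B A$ with $B=T+\lambda_0 A$ compact and $X$ reflexive you conclude that some $x_0\in M_B$ satisfies $Bx_0\perp_B Ax_0$. This Bhatia--\v{S}emrl-type transfer is valid for Hilbert spaces but fails in general Banach spaces; the available characterization (cf.\ \cite{S1}) only yields $x_1,x_2\in M_B$ with $Ax_1\in(Bx_1)^{+}$ and $Ax_2\in(Bx_2)^{-}$, which does not furnish a single point where the scalar infimum matches $\|B\|$. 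So the sharpness claim is not established---but since the theorem does not require it, your proposal still proves the stated result.
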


\begin{proof}
Let $x_0\in \mathcal{A}$. Then $Tx_0\not=0$ and by continuity of the
function $f(\lambda)=\|Tx_0+\lambda Ax_0\|$, $\lambda\in\mathbb{R}$
and the fact that $f(\lambda)\longrightarrow \infty$ as
$\lambda\longrightarrow\pm \infty$,  it follows that
$\inf_\lambda\|Tx_0+\lambda Ax_0\|>0$. Also,
$\inf_\lambda\|Tx_0+\lambda Ax_0\|\leq \|Tx_0\|\leq 1$. Let
$\varepsilon_{x_0}\in[0,1)$ be such that $\inf_\lambda\|Tx_0+\lambda
Ax_0\|=\sqrt{1-\varepsilon_{x_0}^2}$. If $x\in X$, then it follows
from \cite[Proposition 2.1]{S1} that either $Ax\in (Tx)^+$ or $Ax\in
(Tx)^-$. Since $X$ is a reflexive Banach space and
$T\in\mathcal{K}(X,Y)$, it follows that $M_T\not=\emptyset$. Now, by
using Theorem~\ref{operator dragomir}, we get
$T\perp_D^{\varepsilon_{x_0}} A$. If we fix $\alpha=\sup_{x\in
\mathcal{A}}\inf_{\lambda\in\mathbb{R}}\|Tx+\lambda Ax\|$, then
clearly $\alpha \in (0,1]$. Let $\varepsilon\in[0,1)$ be such that
$\alpha=\sqrt{1-\varepsilon^2}$. Then $T\perp_D^\varepsilon A$ and
this completes the proof.
\end{proof}

\begin{remark}
The proof of the above theorem suggests that if $x_0\in\mathcal{A}$
and $\inf_\lambda\|Tx_0+\lambda Ax_0\|=\sqrt{1-\varepsilon_{x_0}^2}$,
then $T\perp_D^{\varepsilon_{x_0}} A$. Thus
$\sqrt{1-\varepsilon^2}=\sup_{x\in
\mathcal{A}}\inf_{\lambda\in\mathbb{R}}\|Tx\\+\lambda Ax\|$ provides
the best possible estimate for $\varepsilon\in[0,1)$ such that
$T\perp_D^\varepsilon A$.
\end{remark}

As an application of the above theorem, for finite-dimensional
spaces $X$, $Y$ and operators $T$, $A\in S_{\mathcal{B}(X,Y)}$ with
$A\perp_B T$, we now obtain an estimate of $\varepsilon$ such that
$T\perp_D^\varepsilon A$.

\begin{theorem}
Let $X,Y$ be finite-dimensional Banach spaces. Let
$T,A\in\mathcal{B}(X,Y)$ with $\|T\|=\|A\|=1$ and let
$\mathcal{A}=\{x\in S_X: Tx\not=\lambda
Ax~\mbox{for~all}~\lambda\in\mathbb{R}\}$. If $A \perp_B T$, then
$T\perp_D^\varepsilon A$, where $\sqrt{1-\varepsilon^2}=\sup_{x\in
\mathcal{A}}\inf_{\lambda\in\mathbb{R}}\|Tx+\lambda Ax\|$.
\end{theorem}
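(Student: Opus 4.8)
The plan is to deduce this statement as a corollary of Theorem~\ref{operator orthogonality}, by observing that the finite-dimensionality of $X$ and $Y$ automatically supplies all the hypotheses needed there. First I would record that every finite-dimensional Banach space is reflexive, so $X$ is reflexive, and that every bounded linear operator between finite-dimensional spaces is compact, so $\mathcal{B}(X,Y)=\mathcal{K}(X,Y)$ and hence $T,A\in\mathcal{K}(X,Y)$. Consequently the only hypothesis of Theorem~\ref{operator orthogonality} that is not immediately visible is the nonemptiness of $\mathcal{A}$; once that is secured, Theorem~\ref{operator orthogonality} applies verbatim and delivers $T\perp_D^\varepsilon A$ with $\sqrt{1-\varepsilon^2}=\sup_{x\in\mathcal{A}}\inf_{\lambda\in\mathbb{R}}\|Tx+\lambda Ax\|$.

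The crux of the argument, and the step I expect to be the main obstacle, is therefore to show that $\mathcal{A}\neq\emptyset$ whenever $A\perp_B T$ and $\|T\|=\|A\|=1$. I would argue by contradiction: if $\mathcal{A}=\emptyset$, then by homogeneity $Tx$ and $Ax$ are linearly dependent for every $x\in X$. Testing this on $\ker A$ first yields $\ker A\subseteq\ker T$, since any $x$ with $Ax=0\neq Tx$ would lie in $\mathcal{A}$. Writing $X=\ker A\oplus W$ with $A|_W$ injective, for linearly independent $x,y\in W$ the images $Ax,Ay$ are independent, so comparing $T(x+y)=Tx+Ty$ with the dependence relations $Tx=\lambda(x)Ax$, $Ty=\lambda(y)Ay$, $T(x+y)=\lambda(x+y)(Ax+Ay)$ forces $\lambda(x)=\lambda(y)=\lambda(x+y)$. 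Hence the scalar factor is constant on $W\setminus\{0\}$, giving $T=cA$ on $W$, and since both operators vanish on $\ker A$, in fact $T=cA$ on all of $X$.

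Finally I would close the contradiction using the normalization: $T=cA$ with $\|T\|=\|A\|=1$ gives $|c|=1$, so $T=\pm A$; but then $A\perp_B T$ would read $\|A\pm\lambda A\|=|1\pm\lambda|\geq 1$ for all $\lambda\in\mathbb{R}$, which fails at $\lambda=\mp 1$. This contradiction proves $\mathcal{A}\neq\emptyset$, and with reflexivity, compactness, and the nonemptiness of $\mathcal{A}$ all in place, invoking Theorem~\ref{operator orthogonality} completes the proof. The genuinely delicate point is the global rigidity implication $\mathcal{A}=\emptyset\Rightarrow T=\pm A$; here one must treat the kernel of $A$ and the case $\dim W=1$ with a little care, since the constant-multiplier step requires two independent vectors in $W$, although the rank-one case yields $T=cA$ even more directly.
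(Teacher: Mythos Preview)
Your proposal is correct and follows essentially the same strategy as the paper: reduce to Theorem~\ref{operator orthogonality} by verifying that $\mathcal{A}\neq\emptyset$, and obtain this via a contradiction showing that $\mathcal{A}=\emptyset$ forces $T$ to be a scalar multiple of $A$, which is incompatible with $A\perp_B T$. The only difference is organizational: the paper splits into the cases $\mathrm{rank}\,A\geq 2$ and $\mathrm{rank}\,A=1$, working with an explicit basis $\{x_1,\ldots,x_k\}$ for a complement of $\ker A$ and $\{x_{k+1},\ldots,x_n\}$ for $\ker A$, whereas you first establish $\ker A\subseteq\ker T$ and then argue on a complement $W$ that the scalar factor is constant. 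Your packaging is slightly cleaner (it absorbs the paper's sub-case (ii) into the kernel step and unifies the rank cases), while the paper's version is more explicit; both arrive at the same contradiction. One small remark: once you reach $T=cA$, you need not pass through $|c|=1$---since $\|T\|=1$ forces $c\neq 0$, taking $\lambda=-1/c$ already gives $\|A+\lambda T\|=0<1$, contradicting $A\perp_B T$ directly, as the paper in effect uses.
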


\begin{proof}
In order to apply Theorem~\ref{operator orthogonality}, we need to
show that $\mathcal{A}\not=\emptyset$. Suppose on the contrary that
$\mathcal{A}=\emptyset$. Then for each $x\in S_X$, there exists
$\lambda_x\in \mathbb{R}$ such that $Tx=\lambda_x Ax$. Clearly, $A
\perp_B T$ implies that there does not exist $\lambda\in\mathbb{R}$
such that $Tx=\lambda Ax$ for all $x\in X$. We now consider the
following two cases.

Let $\mbox{rank}~ A\geq 2$ and let $\{Ax_1,\ldots, Ax_k\}$ be a
basis for $\mbox{range} ~A$,  where $x_1,\ldots,x_k\in S_X$, $2\leq
k \leq n$, where $\mbox{dim}~ X=n$. Let $\{x_1,\ldots,
x_k,x_{k+1},\ldots, x_n\}$ be a basis for $X$, where
$\{x_{k+1},\ldots, x_n\}\subseteq S_X$ is a basis for $\mbox{ker}~
A$.

In this case there are following two possibilities:

$(i)$ there exist $1\leq i,j\leq k$ such that $Tx_i=\lambda_{x_i}
Ax_i$ and $Tx_j=\lambda_{x_j} Ax_j$ for
$\lambda_{x_i}\not=\lambda_{x_j}$,

$(ii)$  there exists a $\lambda\in\mathbb{R}$ such that
$Tx_i=\lambda Ax_i$ for each $1\leq i\leq k$.

First consider the case $(i)$. In this case
\begin{align*}
T\left(\frac{x_i+x_j}{\|x_i+x_j\|}\right)=\frac{\lambda_{x_i}Ax_i+\lambda_{x_j}Ax_j}{\|x_i+x_j\|}.
\end{align*}

Using the assumption $\mathcal{A}=\emptyset$, let
$\lambda\in\mathbb{R}$ be such that
\begin{align*}
T\left(\frac{x_i+x_j}{\|x_i+x_j\|}\right)=\lambda
A\left(\frac{x_i+x_j}{\|x_i+x_j\|}\right).
\end{align*}

Thus $(\lambda_{x_i}-\lambda)Ax_i+(\lambda_{x_j}-\lambda)Ax_j=0$ and
this proves that $\lambda=\lambda_{x_i}=\lambda_{x_j}$. This leads
to a contradiction as $\lambda_{x_i}\not=\lambda_{x_j}$.

Now, we will consider the case $(ii)$ as above. In this case
$x_{i_0}\not\in \mbox{ker}~ T$ for at least one $i_0$, $k+1\leq
i_0\leq n$, otherwise $T=\lambda A$. Clearly $Tx_{i_0}\not=\lambda
Ax_{i_0}$ for all $\lambda\in\mathbb{R}$. This contradicts that
$\mathcal{A}=\emptyset$. Thus if $\mbox{rank}~ A\geq 2$, then
$\mathcal{A}\not=\emptyset$.

Now, consider the case when $\mbox{rank}~ A=1$. Let $\mbox{range}~
A=\mbox{span}~\{Ax_1\}$ where $x_1\in S_X$ and
$\{x_2,\ldots,x_n\}\subseteq S_X$ be a basis for $\mbox{ker} ~A$. By
the assumption $\mathcal{A}=\emptyset$ and thus $Tx_1=\lambda_{x_1}
Ax_1$ for some $\lambda_{x_1}\in \mathbb{R}$. Clearly, $A\perp_B T$
implies $x_{i_0}\not\in \mbox{ker}~ T$ for at least one $i_0$,
$2\leq i_0\leq n$. This implies $Tx_{i_0}\not=\lambda Ax_{i_0}$ for
all $\lambda\in\mathbb{R}$. This contradicts that
$\mathcal{A}=\emptyset$ and thus in this case also
$\mathcal{A}\not=\emptyset$. Now, the result follows from
Theorem~\ref{operator orthogonality}.

\end{proof}

The above theorem can be extended to compact operators on a
reflexive Banach space, under the additional assumption of
injectivity of $A$ or $T$.

\begin{theorem}
Let $X$ be a reflexive Banach space and  $Y$ any normed linear
space. Assume that $T,A\in \mathcal{K}(X,Y)$ with $\|T\|=\|A\|=1$ and either
$A$ or $T$ is one to one operator. Define $\mathcal{A}=\{x\in S_X:
Tx\not=\lambda Ax~\mbox{for~all}~\lambda\in\mathbb{R}\}$. If $A
\perp_B T$, then $T\perp_D^\varepsilon A$, where
$\sqrt{1-\varepsilon^2}=\sup_{x\in
\mathcal{A}}\inf_{\lambda\in\mathbb{R}}\|Tx+\lambda Ax\|$.
\end{theorem}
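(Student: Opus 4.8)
The plan is to reduce this statement to Theorem~\ref{operator orthogonality}, whose only hypothesis beyond the present ones is that the set $\mathcal{A}$ is nonempty. Thus the entire content of the proof is to show that the injectivity of $A$ or of $T$ forces $\mathcal{A}\neq\emptyset$; once this is established, the conclusion $T\perp_D^\varepsilon A$ with the stated value $\sqrt{1-\varepsilon^2}=\sup_{x\in\mathcal{A}}\inf_{\lambda\in\mathbb{R}}\|Tx+\lambda Ax\|$ follows verbatim from Theorem~\ref{operator orthogonality}.

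To prove $\mathcal{A}\neq\emptyset$ I would argue by contradiction, assuming $\mathcal{A}=\emptyset$, so that for every $x\in S_X$ there is a scalar $\lambda_x\in\mathbb{R}$ with $Tx=\lambda_x Ax$. First I would reduce both injectivity hypotheses to injectivity of $A$: if $T$ is one to one, then for $x\neq 0$ we have $Tx\neq 0$, so $\lambda_x Ax=Tx\neq 0$ and in particular $Ax\neq 0$; hence $A$ is one to one as well. Therefore, under the standing assumption $\mathcal{A}=\emptyset$, it is enough to treat the case where $A$ is injective.

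With $A$ injective, $A$ carries linearly independent pairs to linearly independent pairs, since $\alpha Ax+\beta Ay=0$ gives $A(\alpha x+\beta y)=0$ and hence $\alpha x+\beta y=0$. Assuming $\dim X\geq 2$, I would pick linearly independent $x,y\in S_X$ and evaluate $T$ at $x+y$: using $\mathcal{A}=\emptyset$ there is $\mu$ with $T(x+y)=\mu A(x+y)$, while on the other hand $T(x+y)=Tx+Ty=\lambda_x Ax+\lambda_y Ay$. Subtracting yields $(\lambda_x-\mu)Ax+(\lambda_y-\mu)Ay=0$, and the independence of $Ax,Ay$ forces $\lambda_x=\mu=\lambda_y$. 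Combined with the dependent case (if $y=cx$ then $Ty=cTx=\lambda_x Ay$, so $\lambda_y=\lambda_x$ since $Ay\neq 0$), all the scalars $\lambda_x$ coincide with a single $\lambda$, so $T=\lambda A$. This contradicts $A\perp_B T$: the requirement $\|A+\mu T\|\geq\|A\|=1$ for all $\mu$ fails at $\mu=-1/\lambda$, while $\lambda=0$ would force $T=0$. The one-dimensional case is degenerate and can be disposed of directly, since there $A\perp_B T$ already rules out $Tx_0=\lambda Ax_0$ for the single generator $x_0$, giving $\mathcal{A}\neq\emptyset$ without any injectivity assumption.

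The main obstacle is exactly this production of a point in $\mathcal{A}$; everything else is a citation of the earlier theorem. In the finite-dimensional version the nonemptiness was extracted through a rank analysis of $A$, splitting into $\mathrm{rank}\,A\geq 2$ and $\mathrm{rank}\,A=1$. The role of the injectivity hypothesis here is precisely to replace that rank bookkeeping in a possibly infinite-dimensional setting: injectivity is what guarantees that linearly independent vectors remain linearly independent under $A$, which is the mechanism that forces all the $\lambda_x$ to agree and hence yields the contradiction $T=\lambda A$.
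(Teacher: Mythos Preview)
Your proposal is correct and follows essentially the same route as the paper: assume $\mathcal{A}=\emptyset$, derive that all the scalars $\lambda_x$ coincide so that $T=\lambda A$, and contradict $A\perp_B T$; then invoke Theorem~\ref{operator orthogonality}.

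The one genuine difference is your reduction of the ``$T$ injective'' case to the ``$A$ injective'' case. The paper treats the two hypotheses by parallel but separate arguments: for $A$ injective it obtains $A((\lambda_x-\lambda)x+(\lambda_y-\lambda)y)=0$ and cancels $A$; for $T$ injective it rewrites $Ax=\lambda_x^{-1}Tx$, obtains $T((\lambda_x^{-1}-\lambda^{-1})x+(\lambda_y^{-1}-\lambda^{-1})y)=0$ and cancels $T$. Your observation that, under the standing assumption $\mathcal{A}=\emptyset$, injectivity of $T$ forces $Ax\neq 0$ for all $x\neq 0$ (since $Tx=\lambda_xAx\neq 0$) and hence injectivity of $A$, lets you run only the first argument. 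This is a small but clean simplification; the paper's version has the mild advantage of being manifestly symmetric in $T$ and $A$, while yours avoids duplicating the computation.
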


\begin{proof}
To prove the result we need to show that
$\mathcal{A}\not=\emptyset$. Suppose on the contrary that
$\mathcal{A}=\emptyset$. Then for each $x\in S_X$, there exists
$\lambda_x\in \mathbb{R}$ such that $Tx=\lambda_x Ax$. Clearly $A
\perp_B T$ implies that there exist $x,y\in S_X$ such that
\begin{equation}\label{lambda}
Tx=\lambda_x Ax\quad \mbox{~and~}\quad Ty=\lambda_y Ay
\end{equation}
for $\lambda_x\not=\lambda_y$. This implies $x$ and $y$ are linearly
independent in $X$.

Let $\lambda\in \mathbb{R}$ be such that
\begin{align}\label{lambda1}
T\left(\frac{x+y}{\|x+y\|}\right)=\lambda
A\left(\frac{x+y}{\|x+y\|}\right).
\end{align}
Also, we have,
\begin{align}\label{lambda2}
T\left(\frac{x+y}{\|x+y\|}\right)=
\frac{\lambda_xAx+\lambda_yAy}{\|x+y\|}.
\end{align}

Let us first assume that $A$ is one to one operator. Now, using
(\ref{lambda1}), (\ref{lambda2}) we have
$A((\lambda_x-\lambda)x+(\lambda_y-\lambda)y)=0$ and using the
assumption that $A$ is one to one we get
$(\lambda_x-\lambda)x+(\lambda_y-\lambda)y=0$. It follows from the
linear independence of $x,y$ that $\lambda_x=\lambda=\lambda_y$. But
this leads to a contradiction as $\lambda_x\not=\lambda_y$. This
implies that $\mathcal{A}\not=\emptyset$. Thus in this case the
result follows from Theorem~\ref{operator orthogonality}.

Now, we assume that $T$ is one to one operator. It follows from this
assumption on $T$ that in (\ref{lambda}) $\lambda_x,\lambda_y\not=0$
and also in (\ref{lambda1}), we have $\lambda\not=0$. After
rewriting (\ref{lambda1}) and using (\ref{lambda}), we get,
\begin{align*}
\frac{1}{\lambda}T\left(\frac{x+y}{\|x+y\|}\right)=
A\left(\frac{x+y}{\|x+y\|}\right),
\end{align*}

\begin{align*}
\frac{\frac{1}{\lambda_x}Tx+\frac{1}{\lambda_y}Ty}{\|x+y\|}=A\left(\frac{x+y}{\|x+y\|}\right).
\end{align*}

Thus
$T((\frac{1}{\lambda_x}-\frac{1}{\lambda})x+(\frac{1}{\lambda_y}-\frac{1}{\lambda})y)=0$
and using the assumption that $T$ is one to one we get
$(\frac{1}{\lambda_x}-\frac{1}{\lambda})x+(\frac{1}{\lambda_y}-\frac{1}{\lambda})y=0$.
Now, the result follows from the similar arguments as those used in
the previous case.
\end{proof}

\section{C-approximate symmetry of the Birkhoff-James orthogonality}

It was observed in \cite{CW1}, that in
$(\mathbb{R}^2,\|~\|_\infty)$ the Birkhoff-James orthogonality is
not C-approximately symmetric. In the following proposition we study
the C-approximate left-symmetry and the C-approximate right-symmetry
of elements of $(\mathbb{R}^n,\|~\|_\infty)$ in detail. In
particular, the following result illustrates that in the local
sense, the C-approximate left-symmetry is not equivalent to
the C-approximate right-symmetry of the Birkhoff-James orthogonality. It
is well known that the dual of $(\mathbb{R}^n,\|~\|_\infty)$ can be
identified with $(\mathbb{R}^n,\|~\|_1)$, where the dual action is
given by $f(x)=\sum\limits_{i=1}^{n}f_i x_i$ for all $x=(x_1,\ldots,
x_n)\in(\mathbb{R}^n,\|~\|_\infty)$ and $f=(f_1,\ldots,
f_n)\in(\mathbb{R}^n,\|~\|_1)$. If $t\in \mathbb{R}$, then ${\rm sgn}\,t$
denotes the sign function, that is, ${\rm sgn}\,t=\frac{t}{|t|}$ for
$t\not=0$ and ${\rm sgn}\,0=0$.

\begin{proposition}\label{ellinfinity}
Let $X=(\mathbb{R}^n,\|~\|_\infty)$. Then

$(i)$ any smooth point $x\in S_X$ is C-approximately
left-symmetric but not C-approximately right-symmetric;

$(ii)$ any extreme point $x$ of $S_X$ is C-approximately
right-symmetric but not C-approximately left-symmetric.
\end{proposition}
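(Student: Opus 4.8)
The plan is to reduce everything to the explicit description of supporting functionals in $(\mathbb{R}^n,\|\cdot\|_\infty)$ together with the characterization (\ref{Bchar}) and James's theorem, assuming throughout that $n\geq 2$ (for $n=1$ the two classes coincide and the statement is vacuous). First I would record the basic geometric dictionary. For $0\neq y=(y_1,\dots,y_n)$ write $I_y=\{i:|y_i|=\|y\|_\infty\}$ for its set of norming coordinates; then, identifying $X^*$ with $(\mathbb{R}^n,\|\cdot\|_1)$, one checks directly that $J(y)$ consists exactly of those $g$ with $\|g\|_1=1$, $\operatorname{supp}g\subseteq I_y$ and $\operatorname{sgn}g_i=\operatorname{sgn}y_i$ on the support. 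In particular $x\in S_X$ is a smooth point precisely when $I_x$ is a singleton, say $I_x=\{k\}$, in which case $J(x)=\{\operatorname{sgn}(x_k)e_k\}$; and $x$ is an extreme point precisely when $I_x=\{1,\dots,n\}$, i.e. $x=(\pm1,\dots,\pm1)$, in which case $J(x)$ is the whole family of sign-matching unit $\ell_1$ functionals.

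For the smooth point in $(i)$, I would prove C-approximate left-symmetry with the explicit \emph{uniform} constant $\varepsilon_x:=\max_{i\neq k}|x_i|<1$. Indeed, by James's theorem $x\perp_B y$ forces $y_k=0$ (the only $f\in J(x)$ is $\operatorname{sgn}(x_k)e_k$), so $k\notin I_y$ and every $g\in J(y)$ is supported off the index $k$; estimating $|g(x)|=\bigl|\sum_{i\neq k}g_ix_i\bigr|\leq\varepsilon_x\|g\|_1=\varepsilon_x$ and invoking (\ref{Bchar}) gives $y\perp_B^{\varepsilon_x}x$ uniformly in $y$. The failure of right-symmetry is then witnessed by a \emph{single} $y$: choosing any $j\neq k$ and setting $y=e_k-\operatorname{sgn}(x_j)e_j$ (either sign if $x_j=0$), a short computation solving $g(x)=0$ for $g\in J(y)$ (the solution $s=|x_j|/(1+|x_j|)\in[0,1)$ for the weight on $e_k$) shows $y\perp_B x$, while $|y_k|=1=\|y\|_\infty$ forces $|f(y)|=1$ for the only $f\in J(x)$; by (\ref{Bchar}) no $\varepsilon<1$ can work, so $x$ is not C-approximately right-symmetric.

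For the extreme point in $(ii)$ the roles interchange, and I expect the computation to be driven by the observation that, since $x_i=\pm1$ for all $i$, the quantities $f(y)$ for $f\in J(x)$ run over convex combinations of the numbers $\operatorname{sgn}(x_i)y_i$, while $g(x)$ for $g\in J(y)$ runs over convex combinations of the signs $\operatorname{sgn}(y_i)\operatorname{sgn}(x_i)$, $i\in I_y$. From this I would read off right-symmetry with $\varepsilon_x=0$: if $y\perp_B x$ then some $g\in J(y)$ has $g(x)=0$, which forces $\operatorname{sgn}(y_i)\operatorname{sgn}(x_i)$ to take both values $+1$ and $-1$ on $\operatorname{supp}g\subseteq I_y$; picking two such coordinates $i_0,j_0$ and placing weight $\tfrac12$ on each of $\operatorname{sgn}(x_{i_0})e_{i_0}$, $\operatorname{sgn}(x_{j_0})e_{j_0}$ produces $f\in J(x)$ with $f(y)=0$, i.e. $x\perp_B y$. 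Conversely, to defeat left-symmetry I would take $y=x_1e_1-\tfrac12 x_2e_2$: here $I_y=\{1\}$ is entirely sign-aligned with $x$, so $|g(x)|=1$ for the unique $g\in J(y)$ and no $\varepsilon<1$ works, whereas $\operatorname{sgn}(x_i)y_i$ takes the values $1$ and $-\tfrac12$, so $0$ lies in their convex hull and $x\perp_B y$.

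The routine parts are the functional bookkeeping; the one genuinely decisive point, and the main thing to get right, is the asymmetry in what a single coordinate can do. For the two negative statements it is crucial that a bad $y$ can be chosen so that its unique norming coordinate is fully aligned with the relevant functional, making $|f(y)|$ (respectively $|g(x)|$) equal to $1$; this is exactly why a single witness destroys even the pointwise approximate symmetry, and it is the structural reason a smooth point fails on the right while an extreme point fails on the left.
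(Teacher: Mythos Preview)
Your approach matches the paper's almost exactly: the explicit description of $J(y)$ in $\ell_\infty^n$, the characterization (\ref{Bchar}), the same uniform constant $\varepsilon_x=\max_{i\neq k}|x_i|$ for left-symmetry at a smooth point, and the half--half functional yielding exact (not merely approximate) right-symmetry at an extreme point. The witnesses you pick for the two negative statements differ cosmetically from the paper's, but the mechanism is identical.

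There is one small slip. In your witness $y=e_k-\operatorname{sgn}(x_j)e_j$ for the failure of right-symmetry at a smooth $x$, the computation $s=|x_j|/(1+|x_j|)$ solving $g(x)=0$ tacitly assumes $x_k=+1$: if $x_k=-1$ and $x_j\neq 0$, then for every $g\in J(y)$ one has $g(x)=-s-(1-s)|x_j|<0$, so $y\not\perp_B x$ and the witness fails. The paper sidesteps this by first noting that, by the symmetry of $S_X$, it suffices to treat a single representative smooth point $x=(1,x_2,\dots,x_n)$. You should either make the same WLOG reduction or replace $e_k$ by $\operatorname{sgn}(x_k)e_k$ in your witness; with that correction the argument goes through unchanged.
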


\begin{proof}

Observe that from the symmetry of $S_X$, it is sufficient to prove
the result for any one of the extreme points and smooth points of
$S_X$.

$(i)$ Let $x=(1,x_2,\ldots,x_n)\in S_X$ be a smooth point. Then
$|x_i|<1$ for all $2\leq i\leq n$ and $J(x)=\{f\}$ where
$f=(1,0,\ldots,0)\in S_{X^*}$. Let $y=(y_1,y_2,\ldots,y_n)\in S_X$
be such that $x\perp_B y$. Then by using $(\ref{Bchar})$, it follows
that $y_1=0$. As $y\in S_X$, there exists $2\leq i_0\leq n$ such
that $|y_{i_0}|=1$. Let $g=(0,0,\ldots,0,{y_{i_0}},0,\ldots,0)\in
S_{X^*}$, where $y_{i_0}$ is the $i_0$-$th$ co-ordinate. Then $g\in
J(y)$ and $|g(x)|=|x_{i_0}|<1$. Thus $(\ref{Bchar})$ implies that
$y\perp_B^{\varepsilon_0} x$, where $\varepsilon_0=|x_{i_0}|$. Now,
if we take $\varepsilon=\underset{2\leq i\leq n}{\max}|x_i|$, then
$\varepsilon\in [0,1)$ and $y\perp_B^\varepsilon x$ whenever
$x\perp_B y$. Hence $x$ is C-approximately left-symmetric.

Now, we show that $x$ is not C-approximately right-symmetric. If
$x_i=0$ for all $2\leq i\leq n$, then $z\perp_B x$, where
$z=(1,1,\ldots,1)$. As $|f(z)|=1$,  there does not exist any
$\varepsilon\in[0,1)$ such that $x\perp_B^\varepsilon z$. Without
loss of generality we now assume that $x_2\not=0$. Let
$w=(1,-{\rm sgn}\,x_2,x_3,\ldots,x_n)\in S_X$. Then for any $\lambda\geq
0$ we have $\|w+\lambda x\|\geq |1+\lambda|\geq 1$.
Also, for any $\lambda<0$, we have
$\|w+\lambda x\|\geq |-{\rm sgn}\,x_2+\lambda x_2|> 1$.

This shows that $w\perp_B x$. As $|f(w)|=1$, there does not exist
any $\varepsilon\in[0,1)$ such that $x\perp_B^\varepsilon w$. Thus,
$(\ref{Bchar})$ implies that $x$ is not C-approximately
right-symmetric. Figure 1, given below, illustrates this situation
for $n=2$.

$(ii)$  Consider $x=(1,1,\ldots,1)\in \mbox{Ext}\,B_X$. It follows
from the arguments of $(i)$ that $x$ is not C-approximately
left-symmetric.

We now prove that $x$ is C-approximately right-symmetric. Consider
$y=(y_1,y_2,\ldots, y_n)\in S_X$  such that $y\perp_B x$. Since
$y\in S_X$, there exists $1\leq i\leq n$ such that $|y_{i}|=1$. Let
$\{i_1,i_2,\ldots,i_k\}\subseteq\{1,2,\ldots,n\}$ be a maximal
subset such that $|y_{i_j}|=1$ for $1\leq j\leq k$. We now claim
that $k>1$. Suppose on the contrary that $k=1$. Then $y\in
\mbox{sm}~S_X$, $J(y)=\{f\}$, where
$f=(0,0,\ldots,0,1,0,\ldots,0)\in S_{X^*}$ and $1$ is the $i_1$-$th$
co-ordinate. But $f(x)\not=0$ and this contradicts that $y\perp_B
x$.

We now claim that there exist $1\leq l\not=m\leq k$ such that
$y_{i_l}=-y_{i_m}$. If $y_{i_l}=y_{i_m}$ for all $1\leq l,m\leq k$, 
then for sufficiently small absolute value $\lambda$, it is easy to
see that $\|y+\lambda x\|=|y_{i_1}+\lambda|$. This clearly
contradicts that $y\perp_B x$ and hence there exist $1\leq
l\not=m\leq k$ such that $y_{i_l}=-y_{i_m}$. Now, if we take
$g=(0,0,\ldots,0,{\frac{1}{2}},0,\ldots,0,
{\frac{1}{2}},0,\ldots,0)\in S_{X^*}$, where $\frac{1}{2}$ is at
$i_1$-$th$ and $i_m$-$th$ co-ordinates. Then $g\in J(x)$ and
$g(y)=0$. This shows that $x$ is right-symmetric and hence
C-approximately right-symmetric. Figure 2, given below, illustrates
this situation for $n=2$.

\begin{tikzpicture}[scale=1.3]

\draw (-1.2,1.2)  node {$y$};

\draw (1.2,1.2)  node {$x$};

\draw (-1.3,-1.2)  node {$-x$};

\draw (0.7,-1.2)  node {$-y$};

\draw [fill] (-1,1) circle [radius=.05];

\draw [fill] (1,1) circle [radius=.05];

\draw [fill] (-1,-1) circle [radius=.05];

\draw [fill] (1,-1) circle [radius=.05];

\draw[thick] (-1,1)--(1,1) node[right]{};

\draw[thick] (-1,-1)--(1,-1) node[right]{};

\draw[thick] (1,1)--(1,-1) node[right]{};

\draw[thick] (-1,1)--(-1,-1) node[right]{};

\draw[thick, black,dashed] (-2,0)--(2,0) node[above]{};

\draw[thick,black,dashed] (0,-2)--(0,2) node[above]{};

\draw[thick,red, dashed] (-1.7,1.7)--(1.7,-1.7) node[right]{};

\draw[thick,red, dashed] (2.5,-0.5)--(-0.5,2.5) node[right]{};

\draw (-0.4,2.1) node {$g$};

\draw (-1.9,1.4) node {$\mbox{ker}~g$};

\draw[thick] (-1,1)--(1,1) node[right]{};

\draw[thick] (-1,-1)--(-1,-1) node[right]{};

\draw (-6.3,1.2)  node {$-w$};

\draw (-3.8,1.2)  node {$z$};

\draw (-6.3,-1.2)  node {$-z$};

\draw (-4.2,-1.2)  node {$w$};

\draw [fill] (-6,1) circle [radius=.05];

\draw [fill] (-4,1) circle [radius=.05];

\draw [fill] (-6,-1) circle [radius=.05];

\draw [fill] (-4,-1) circle [radius=.05];

\draw[thick] (-6,1)--(-4,1) node[right]{};

\draw[thick] (-6,-1)--(-4,-1) node[right]{};

\draw[thick] (-4,1)--(-4,-1) node[right]{};

\draw[thick] (-6,1)--(-6,-1) node[right]{};

\draw[thick, blue,dashed] (-7.2,1)--(-2.8,1) node[right]{};

\draw (-7,1.2) node{$g$};

\draw[thick,blue, dashed] (-7.2,-1)--(-2.8,-1) node[right]{};

\draw (-7,-0.8) node{$-g$};

\draw[thick,blue, dashed] (-7.2,0)--(-2.8,0) node[above]{};

\draw (-7,0.2) node{$\mbox{ker}~ g$};

\draw[thick, red, dashed] (-5,-2)--(-5,2) node[above]{};

\draw (-3.8,0.4)  node {$x$};

\draw (-6.3,0.4)  node {$-x$};

\draw (-5.2,1.2)  node {$y$};

\draw (-5.4,-1.2)  node {$-y$};

\draw [fill] (-4,0.4) circle [radius=.05];

\draw [fill] (-6,0.4) circle [radius=.05];

\draw [fill] (-5,1) circle [radius=.05];

\draw [fill] (-5,-1) circle [radius=.05];

\draw[thick, red, dashed] (-4,2)--(-4,-2) node[right]{};

\draw (-3.8,-1.6) node {$f$};

\draw[thick, red, dashed] (-6,2)--(-6,-2) node[right]{};

\draw (-5.75,-1.6) node {$-f$};

\draw (-4.6,-1.6) node {$\mbox{ker} ~f$};

\draw (-5,-2.5) node {Figure 1};

\draw (0,-2.5)  node {Figure 2};

\end{tikzpicture}

\end{proof}

\bigskip

\begin{remark}
The proof of the above proposition suggests that in
$X=(\mathbb{R}^n,\|~\|_\infty)$ the C-approximate right-symmetry
(the C-approximate left-symmetry) of any $x\in \mbox{sm}~S_X$ ($x\in
\mbox{Ext}\,B_X$) fails because there exists $y\in$$^\perp x\cap S_X$
($y\in x^\perp \cap S_X$) such that either $f\in J(y)$ or $-f \in
J(y)$ ($f\in J(x)$ or $-f \in J(x)$) where $J(x)=\{f\}$ ($f\in
J(y)$).
\end{remark}

The above remark is the main motivation behind considering the local
property (P) for $x\in S_X$ introduced in the first section. Recall
that the local property (P) holds for $x\in S_X$ if
\begin{align*}
x^\perp\cap \mathscr{A}(x)=\emptyset,
\end{align*}
where $\mathscr{A}(x)$ is the collection of all those elements $y\in
S_X$ for which given any $f\in J(y)$, either $f$ or $-f$ is in
$J(x)$. Also, recall that the property (P) holds for a normed linear
space $X$ if the local property (P) holds for each $x\in S_X$.

We now show that in finite-dimensional Banach spaces, the local
property (P) for all elements of $\mbox{Ext}\,B_X$ implies the
property (P) globally for  $X$.

\begin{theorem}\label{extreme points}
Let $X$ be a finite-dimensional Banach space and suppose that the
local property {\rm (P)} holds for each $x\in{\rm Ext}\,B_X$. Then the
local property {\rm (P)} holds for each $y\in S_X$.
\end{theorem}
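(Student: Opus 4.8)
The plan is to argue by contraposition and to reduce the general case to extreme points through a convex-combination representation. Concretely, suppose the local property (P) fails at some $y\in S_X$. By definition this means $y^\perp\cap\mathscr{A}(y)\not=\emptyset$, so I would fix a witness $z\in S_X$ with $y\perp_B z$ and $z\in\mathscr{A}(y)$, and then exhibit an extreme point of $B_X$ at which (P) also fails, contradicting the hypothesis.

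The key technical ingredient is a description of the supporting functionals of $y$ in terms of those of the extreme points appearing in a convex decomposition. Since $X$ is finite-dimensional, Minkowski's theorem lets me write $y=\sum_{i=1}^{k}t_i e_i$ with $e_i\in\mbox{Ext}\,B_X$, $t_i>0$ and $\sum_i t_i=1$; note each $e_i\in S_X$ since extreme points of $B_X$ have norm one. For $f\in S_{X^*}$ one has $f(y)=\sum_i t_i f(e_i)\leq\sum_i t_i=1$, with equality precisely when $f(e_i)=1$ for every $i$, because $t_i>0$ and $f(e_i)\leq 1$. This yields the crucial identity
\begin{align*}
J(y)=\bigcap_{i=1}^{k}J(e_i),
\end{align*}
and in particular $J(y)\subseteq J(e_i)$ for each $i$.

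With this identity in hand the conclusion is quick. From $y\perp_B z$ and James's characterization there is $g\in J(y)$ with $g(z)=0$; since $g\in J(e_i)$ for every $i$, the same characterization gives $e_i\perp_B z$, i.e.\ $z\in e_i^\perp$. Moreover, the inclusion $J(y)\subseteq J(e_i)$ makes $\mathscr{A}$ monotone in the obvious sense: if every $h\in J(z)$ satisfies $h\in J(y)$ or $-h\in J(y)$, then a fortiori $h\in J(e_i)$ or $-h\in J(e_i)$, so $z\in\mathscr{A}(y)\subseteq\mathscr{A}(e_i)$. Hence $z\in e_i^\perp\cap\mathscr{A}(e_i)\not=\emptyset$, so the local property (P) fails at the extreme point $e_i$, which is the desired contradiction. (Here it suffices that the failure propagate to one $e_i$, but it in fact propagates to all of them.)

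I expect the only real obstacle to be establishing the identity $J(y)=\bigcap_i J(e_i)$ cleanly and, relatedly, checking that the single witness $z$ survives the passage from $y$ to each $e_i$. The first is a short equality-in-the-triangle-inequality argument, but it is exactly where finite-dimensionality is genuinely used, since it guarantees the convex decomposition of $y$ into extreme points with strictly positive weights. Everything afterwards is a formal consequence of James's characterization together with the monotonicity of $\mathscr{A}$ under enlarging the set of supporting functionals, so no further geometric input is needed.
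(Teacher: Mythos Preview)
Your proof is correct and follows essentially the same approach as the paper: both write $y$ as a convex combination of extreme points, observe that $J(y)\subseteq J(e_i)$ via equality in the inequality $f(y)=\sum t_i f(e_i)\leq 1$, and then use this inclusion to transfer both the orthogonality $y\perp_B z$ and the condition $z\in\mathscr{A}(y)$ to an extreme point $e_i$. The only cosmetic difference is that you frame the argument as an explicit contraposition, whereas the paper phrases the final step as a direct verification that some $g_0\in J(z)$ satisfies $|g_0(y)|<1$.
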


\begin{proof}
It follows easily that in any normed linear space, the local
property (P) holds for each smooth point. Thus, to prove the result
we need to show that the local property (P) holds for any $y\in
S_X\setminus(\mbox{sm}~S_X\cup\mbox{Ext}\,B_X)$. Let $y\in
S_X\setminus(\mbox{sm}~S_X\cup\mbox{Ext}\,B_X)$. Since $S_X$ is
contained in the convex hull of $\mbox{Ext}\,B_X$, let $x_1,\ldots, x_k\in
\mbox{Ext}\,B_X$, $k\leq |\mbox{Ext}\,B_X|$, be such that
$y=\sum_{i=1}^k\alpha_ix_i$, $\alpha_i>0$ for all $1\leq i\leq k$
and $\sum_{i=1}^k\alpha_i=1$.

Now, we claim that if $f\in J(y)$, then $f\in J(x_i)$ for all $1\leq
i\leq k$. Clearly, $|f(x_i)|\leq 1$ for all $1\leq i \leq k$.
Suppose on the contrary that $f(x_j)<1$ for some $1\leq  j\leq k$.
Then
\begin{align*}
1=f(y)=\sum_{i=1}^k\alpha_if(x_i)<\sum_{i=1}^k \alpha_i=1.
\end{align*}
This clearly leads to a contradiction and thus if $f\in J(y)$, then
$f\in J(x_i)$ for all $1\leq i\leq k$.

Let $z\in y^\perp\cap S_X$. Then there exists $g\in J(y)$ such that
$g(z)=0$. But $g\in J(x_i)$ for all $1\leq i\leq k$; this gives
$x_i\perp_B z$ for all $1\leq i\leq k$.

We now claim that there exists some $g_0\in J(z)$ such that
$|g_0(y)|<1$. Suppose on the contrary that for any $g\in J(z)$ we
have $|g(y)|=1$, that is, for any $g\in J(z)$ either $g$ or $-g$ is
in $J(y)$. Thus for any $g\in J(z)$ either $g$ or $-g$ is in
$J(x_i)$ for all $1\leq i\leq k$. This clearly contradicts the local
property (P) of $x_i$, $1\leq i\leq k$. Thus there exists some
$g_0\in J(z)$ such that $|g_0(y)|<1$ and hence the local property
(P) of $y$ follows.
\end{proof}

The next result shows that the local property (P) of $x\in S_X$ is
equivalent to the C-approximate left-symmetry of $x$ in the local
sense.

\begin{lemma}\label{C-local-left-symmetry}
Let $X$ be a normed linear space. Then the local property {\rm (P)} holds
for $x\in S_X$ if and only if for $y\in x^\perp\cap S_X$, there
exists $\varepsilon_{x,y}\in [0,1)$ such that
$y\perp_B^{\varepsilon_{x,y}} x$.
\end{lemma}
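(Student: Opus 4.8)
The plan is to show that, after applying the functional characterization (\ref{Bchar}) of $\perp_B^{\varepsilon}$, both sides of the asserted equivalence collapse to one and the same elementary condition on supporting functionals, namely
\[
(\ast)\qquad \text{for every } y\in x^{\perp}\cap S_X \text{ there exists } f\in J(y) \text{ with } |f(x)|<1 .
\]
First I would handle the right-hand side. Fix $y\in x^{\perp}\cap S_X$ and $\varepsilon\in[0,1)$. Applying (\ref{Bchar}) with the two vectors interchanged and using $\|x\|=1$, one has $y\perp_B^{\varepsilon} x$ if and only if there is $f\in J(y)$ with $|f(x)|\le\varepsilon$. Hence the existence of some $\varepsilon_{x,y}\in[0,1)$ with $y\perp_B^{\varepsilon_{x,y}} x$ is equivalent to the existence of $f\in J(y)$ with $|f(x)|<1$: in one direction take $\varepsilon_{x,y}=|f(x)|$, and in the other direction any witnessing $f$ for $y\perp_B^{\varepsilon_{x,y}} x$ already satisfies $|f(x)|\le\varepsilon_{x,y}<1$. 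Thus the right-hand side of the lemma is precisely $(\ast)$.

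Next I would translate the local property (P). By definition $y\in\mathscr{A}(x)$ means that every $f\in J(y)$ satisfies $f\in J(x)$ or $-f\in J(x)$. Since $f\in S_{X^*}$ and $x\in S_X$, we have $f\in J(x)\Leftrightarrow f(x)=1$ and $-f\in J(x)\Leftrightarrow f(x)=-1$; as $|f(x)|\le1$ always holds, the clause ``$f\in J(x)$ or $-f\in J(x)$'' is exactly $|f(x)|=1$. Therefore $y\in\mathscr{A}(x)$ if and only if $|f(x)|=1$ for all $f\in J(y)$, and its negation $y\notin\mathscr{A}(x)$ holds if and only if there is $f\in J(y)$ with $|f(x)|<1$. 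Consequently the local property (P), i.e. $x^{\perp}\cap\mathscr{A}(x)=\emptyset$, asserts that no $y\in x^{\perp}\cap S_X$ lies in $\mathscr{A}(x)$, which is again condition $(\ast)$ (recall $\mathscr{A}(x)\subseteq S_X$, so the intersection is automatically taken inside $S_X$).

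Since both the local property (P) and the approximate left-symmetry condition are equivalent to $(\ast)$, the equivalence follows. In the write-up I would still give the two implications explicitly, the converse most cleanly by contraposition: assuming some $y_0\in x^{\perp}\cap\mathscr{A}(x)$, every $f\in J(y_0)$ has $|f(x)|=1$, which by the first paragraph rules out $y_0\perp_B^{\varepsilon} x$ for any $\varepsilon<1$, contradicting the hypothesis. The only step that demands genuine care, and the place where a careless argument could slip, is the passage from the definition of $\mathscr{A}(x)$ phrased via ``$f$ or $-f$ in $J(x)$'' to the numerical statement $|f(x)|=1$; everything resting on it — and indeed the remainder of the proof — is then routine bookkeeping.
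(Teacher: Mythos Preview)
Your proof is correct and follows essentially the same approach as the paper's: both arguments rest on the characterization \eqref{Bchar} and the observation that $y\in\mathscr{A}(x)$ amounts to $|f(x)|=1$ for every $f\in J(y)$. Your presentation via the intermediate condition $(\ast)$ is slightly more streamlined, but the underlying steps match the paper's necessary and sufficient parts almost verbatim.
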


\begin{proof}
We first prove the necessary part of the lemma. Suppose on the
contrary that there exists $y\in x^\perp\cap S_X$ such that
$y\not\perp_B^{\varepsilon} x$ for any $\varepsilon\in[0,1)$.
Clearly, if $f\in J(y)$, then $|f(x)|\leq 1$. Thus for all $f\in
J(y)$ we have $|f(x)|=1$ and consequently either $f\in J(x)$ or
$-f\in J(x)$. This contradicts that the local property (P) holds for
$x$ and thus the necessary part follows.

We now prove the sufficient part of the lemma. Let $y\in
x^{\bot}\cap S_X$. It follows from the assumption that
$y\bot_B^{\varepsilon_{x,y}}x$ and, equivalently, there exists $f\in
J(y)$ such that $|f(x)|\leq \varepsilon_{x,y}<1$, hence $\pm
f\not\in J(x)$. Thus $y\not\in {\mathscr A}(x)$ and the local
property (P) of $x$ follows.
\end{proof}

Observe that in the proof of Theorem~\ref{neighbourhood}, choosing
$\varepsilon_1=0$ instead of $\varepsilon_1>0$,  we obtain a weaker
version of that theorem. The following result is analogous to it.

\begin{lemma}\label{lemmaright BJ}
Let $X$ be a normed linear space and let $x,y\in S_X$ with
$x\perp_B^{\varepsilon} y$ for some $\varepsilon \in [0,1)$. Then
there exists $\delta\in(0,1-\varepsilon)$ such that
$x\perp_B^{\varepsilon+\delta} z$ for all $z\in B(y,\delta)\cap
S_X$.
\end{lemma}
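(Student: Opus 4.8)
The plan is to reduce everything to the functional characterization (\ref{Bchar}) of the approximate orthogonality $\perp_B^{\varepsilon}$. Since $x,y\in S_X$ and $x\perp_B^{\varepsilon}y$, that characterization furnishes a supporting functional $f\in J(x)$ with $|f(y)|\le\varepsilon\|y\|=\varepsilon$. The decisive point — and the reason this is the ``$\varepsilon_1=0$'' analogue of Theorem~\ref{neighbourhood} advertised in the preceding remark — is that $f$ depends only on $x$, not on the perturbed vector $z$. Hence a single $f\in J(x)$ can serve simultaneously as the witness of approximate orthogonality of $x$ to every $z$ lying in a small sphere-neighbourhood of $y$; no perturbation on the $x$-side is required, which is exactly why here we perturb only $y$ whereas in Theorem~\ref{neighbourhood} both arguments were moved.

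Next I would fix any $\delta\in(0,1-\varepsilon)$ (for concreteness one may take $\delta=(1-\varepsilon)/2$) and estimate, for an arbitrary $z\in B(y,\delta)\cap S_X$, the value $|f(z)|$ by the triangle inequality. Using $f\in S_{X^*}$ and $\|z-y\|<\delta$, one has
\[
|f(z)|\le |f(y)|+|f(z-y)|\le \varepsilon+\|f\|\,\|z-y\|<\varepsilon+\delta .
\]
Since $\|z\|=1$, this reads $|f(z)|<(\varepsilon+\delta)\|z\|$, so the same $f\in J(x)$ satisfies the right-hand side of (\ref{Bchar}) with the parameter $\varepsilon+\delta$ in place of $\varepsilon$. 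Consequently $x\perp_B^{\varepsilon+\delta}z$ for every such $z$.

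Finally I would check admissibility of the parameter: because $\delta<1-\varepsilon$ we have $\varepsilon+\delta<1$, so $\varepsilon+\delta\in[0,1)$ and the statement $x\perp_B^{\varepsilon+\delta}z$ is meaningful. This completes the argument. I do not expect any genuine obstacle here; the proof is a one-line triangle-inequality estimate once the right functional has been isolated. The only mild subtlety worth stating explicitly is the uniformity of $f$ over the neighbourhood (it is chosen once, before $\delta$ and $z$), together with keeping $\varepsilon+\delta$ below $1$, both of which are handled automatically by the constraint $\delta\in(0,1-\varepsilon)$.
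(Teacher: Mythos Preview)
Your proof is correct and essentially identical to the paper's own argument: both invoke the characterization (\ref{Bchar}) to obtain $f\in J(x)$ with $|f(y)|\le\varepsilon$, pick any $\delta\in(0,1-\varepsilon)$, and apply the triangle inequality $|f(z)|\le|f(y)|+|f(z-y)|\le\varepsilon+\delta$ to conclude $x\perp_B^{\varepsilon+\delta}z$. Your added remarks on the uniformity of $f$ and the admissibility of $\varepsilon+\delta$ are accurate and make explicit what the paper leaves implicit.
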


\begin{proof}
Since $x\perp_B^{\varepsilon} y$ for some $\varepsilon \in [0,1)$,
there exists $f\in J(x)$ such that $|f(y)|\leq \varepsilon$. Now, if
we choose $\delta\in(0,1-\varepsilon)$, then for all $z\in
B(y,\delta)\cap S_X$, we have,
\begin{align*}
|f(z)|=|f(z)-f(y)+f(y)|\leq |f(y)|+|f(z)-f(y)|\leq
\varepsilon+\delta.
\end{align*}

Thus $x\perp_B^{\varepsilon+\delta} z$ for all $z\in B(y,\delta)\cap
S_X$.
\end{proof}

Lemma \ref{lemmaright BJ} says that the C-approximate orthogonality is stable with respect to the second vector (small perturbation of it does not cause loss of approximate orthogonality). However, as opposed to D-approximate orthogonality (see Theorem \ref{neighbourhood}), there is no analogous stability with respect to the first vector. Namely, as it can be observed in the following example, the implication
\begin{equation}\label{left-stability}
x\perp_B^{\varepsilon} y\quad \Rightarrow\quad \exists\,\delta\in (0,1-\varepsilon)\ \forall\, z\in B(x,\delta)\cap S_X:  z\perp_B^{\varepsilon+\delta} y
\end{equation}
need not be true.

\begin{example}
Let $X=\mathbb{R}^2$ with the  {\it maximum} norm. Let $\varepsilon\in[0,1)$ and
take $x=(1,1)$, $y=(-1,-\varepsilon)$, $y_0=(-1,0)$. Since $x\bot_B y_0$ and $\|y-y_0\|=\varepsilon$, it follows, via \eqref{Bchar2}, that $x\bot_B^{\varepsilon}y$.
Assuming that \eqref{left-stability} is true we take a suitable $\delta\in (0,1-\varepsilon)$ and set  $z=(1,1-\frac{\delta}{2})$. Then $z\in B(x,\delta)\cap S_X$ whence $z\bot_B^{\varepsilon'}y$ with $\varepsilon'=\varepsilon+\delta<1$. It would mean, again by \eqref{Bchar2}, that there exists $y'\in S_X$ such that $z\bot_B y'$ and $\|y'-y\|\leq\varepsilon'<1$. However, since $z^{\bot}\cap S_X=\{(0,1),(0,-1)\}$, we have $y'=(0,1)$ or $y'=(0,-1)$ but in both cases $\|y-y'\|\geq 1$ --- a contradiction. 
\end{example}

We now prove a complete characterization of the C-approximate
right-symmetry on any compact subset of $S_X$ for any normed linear
space $X$.

\begin{theorem}\label{C-approximate right-symmetry}
Let $X$ be a normed linear space and let $\mathcal{A}\subseteq S_X$
be a compact subset. Then any $y\in \mathcal{A}$ is C-approximately
right-symmetric on $\mathcal{A}$ if and only if the local property
{\rm (P)} on $\mathcal{A}$ holds for each $x\in \mathcal{A}$.
\end{theorem}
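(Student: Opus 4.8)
The plan is to convert both sides into the same assertion about pairs of elements of $\mathcal{A}$ and then, in one direction only, to upgrade a pointwise constant to a uniform one by compactness. Recall that $y\in\mathcal{A}$ being C-approximately right-symmetric on $\mathcal{A}$ means: there is $\varepsilon_y\in[0,1)$ such that $z\perp_B y$ (with $z\in\mathcal{A}$) forces $y\perp_B^{\varepsilon_y}z$. The first step is a pointwise dictionary. For $y,z\in S_X$, \eqref{Bchar} gives $y\perp_B^\varepsilon z\Leftrightarrow\exists f\in J(y):|f(z)|\le\varepsilon$; since $|f(z)|\le 1$ always, there exists $\varepsilon\in[0,1)$ with $y\perp_B^\varepsilon z$ exactly when some $f\in J(y)$ has $|f(z)|<1$, and its negation is that $|f(z)|=1$, i.e. $\pm f\in J(z)$, for every $f\in J(y)$. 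Thus
\begin{equation*}
y\in\mathscr{A}(z)\ \Longleftrightarrow\ \text{there is no }\varepsilon\in[0,1)\text{ with }y\perp_B^\varepsilon z .
\end{equation*}
This is exactly the computation in the necessity part of Lemma~\ref{C-local-left-symmetry}.

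Using the dictionary, both conditions in the theorem become the same ``no bad pair'' statement. The local property (P) on $\mathcal{A}$ holding for every $x\in\mathcal{A}$ reads: there is no pair $x,w\in\mathcal{A}$ with $x\perp_B w$ and $w\in\mathscr{A}(x)$. On the other hand, for a fixed $y$, the failure of C-approximate right-symmetry at some particular $z$ (i.e. $y\perp_B^\varepsilon z$ for no $\varepsilon<1$) is, by the dictionary, precisely $y\in\mathscr{A}(z)$; so the assertion that every $y\in\mathcal{A}$ is at least pointwise right-symmetric on $\mathcal{A}$ reads: there is no pair $z,y\in\mathcal{A}$ with $z\perp_B y$ and $y\in\mathscr{A}(z)$. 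After relabelling these are identical. This already gives the implication ``right-symmetry $\Rightarrow$ property (P)'' (uniform right-symmetry trivially implies the pointwise version), and reduces the converse to promoting, for each fixed $y$, the family of pointwise constants to a single $\varepsilon_y<1$.

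The substantive step is this uniformity upgrade, and it is where compactness of $\mathcal{A}$ is used, in the manner of the proof of Theorem~\ref{theorem approximate symmetric}. Fix $y\in\mathcal{A}$ and set $\varepsilon^*_{z,y}=\inf\{\varepsilon\in[0,1):y\perp_B^\varepsilon z\}$ for each $z\in\mathcal{A}$ with $z\perp_B y$; by the dictionary together with property (P) for $z$ we have $\varepsilon^*_{z,y}<1$. Let $\varepsilon_y=\sup_z\varepsilon^*_{z,y}$ and suppose, for contradiction, that $\varepsilon_y=1$. Pick $z_n\in\mathcal{A}$ with $z_n\perp_B y$ and $\varepsilon^*_{z_n,y}\nearrow 1$; by compactness we may assume $z_n\to z_0\in\mathcal{A}$, and continuity of the norm gives $z_0\perp_B y$. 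Property (P) for $z_0$ and the dictionary yield some $\varepsilon_0<1$ with $y\perp_B^{\varepsilon_0}z_0$. Now the decisive feature is that the perturbation $z_n\to z_0$ sits in the \emph{second} slot of $y\perp_B^{\varepsilon}z$, so Lemma~\ref{lemmaright BJ} applies: choosing $\delta<1-\varepsilon_0$ we get $y\perp_B^{\varepsilon_0+\delta}z_n$ for all large $n$, whence $\varepsilon^*_{z_n,y}\le\varepsilon_0+\delta<1$, contradicting $\varepsilon^*_{z_n,y}\nearrow 1$. Hence $\varepsilon_y<1$ and $y$ is C-approximately right-symmetric on $\mathcal{A}$.

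The main obstacle is exactly the placement of the perturbation. C-approximate orthogonality is stable only in its second argument and not in its first (as the preceding Example demonstrates), so the compactness argument is legitimate precisely because right-symmetry keeps $y$ fixed on the left and lets $z$ vary on the right, matching the hypothesis of Lemma~\ref{lemmaright BJ}; the analogous uniform statement for left-symmetry would have no such stability to invoke.
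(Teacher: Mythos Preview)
Your proof is correct and follows essentially the same approach as the paper: the necessary direction unwinds the definitions via the characterization \eqref{Bchar}, and the sufficient direction fixes $y$, applies property~(P) pointwise (as in Lemma~\ref{C-local-left-symmetry}), and then uses compactness together with Lemma~\ref{lemmaright BJ} to upgrade the pointwise constants to a uniform $\varepsilon_y<1$. Your explicit ``dictionary'' $y\in\mathscr{A}(z)\Leftrightarrow y\not\perp_B^{\varepsilon}z$ for all $\varepsilon<1$, and your closing remark that the argument succeeds precisely because the perturbed vector sits in the second slot (where Lemma~\ref{lemmaright BJ} grants stability), make the structure more transparent than the paper's version, but the underlying argument is identical.
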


\begin{proof}
We first prove the necessary part. Suppose on the contrary that
$x\in \mathcal{A}$ is such that $x$ fails to have the local property
(P) on $\mathcal{A}$. This implies that there exists $y\in {x^\perp}
\cap\mathscr{A}(x)\cap\mathcal{A}$. Now, the C-approximate
right-symmetry of $y\in\mathcal{A}$ on $\mathcal{A}$  implies that
there exist $\varepsilon\in[0,1)$ and $g\in J(y)$ such that
$|g(x)|\leq \varepsilon$. This leads to a contradiction since
$y\in\mathscr{A}(x)$ implies $|g(x)|=1$.

We now prove the sufficient part.  Suppose on the contrary that
there exists $y\in \mathcal{A}$ such that $y$ is not C-approximately
right-symmetric on $\mathcal{A}$. Observe that if
$z\in ^\bot$${y}\cap \mathcal{A}$, then it follows from similar
arguments as those used in Lemma~\ref{C-local-left-symmetry} that
there exists $\varepsilon_{z,y}\in [0,1)$ such that
$y\perp_B^{\varepsilon_{z,y}}z$. Let $\varepsilon_{z,y}^*$ be the
infimum of all such $\varepsilon_{z,y}$. By the assumption $y$ is
not C-approximately right-symmetric on $\mathcal{A}$, this implies
that $\varepsilon_y=\underset{{z\in ^\bot}{{y}\cap \mathcal{A}}}\sup
\varepsilon_{z,y}^*=1$. Thus we can find $\{z_n\}\subseteq
^\bot$${y}\cap \mathcal{A}$ such that $y\perp_{B}
^{\varepsilon_n^*}z_n$ for $\varepsilon_n\nearrow 1$. Now, from the
compactness of $\mathcal{A}$ we can find a convergent subsequence of
$\{z_n\}$ which we again denote by $\{z_n\}$. Let
$z_n\longrightarrow z_0$, then by continuity of the norm and
compactness of $\mathcal{A}$, it follows that $z_0\in ^\bot$${y}
\cap \mathcal{A}$. Again, from similar arguments as those used in
Lemma~\ref{C-local-left-symmetry}, it follows that
$y\perp_B^{\varepsilon_{z_0,y}}z_0$ for some
$\varepsilon_{z_0,y}\in[0,1)$.

Now, if we choose $\delta\in(0,1-\varepsilon_{z_0,y})$, then it
follows from Lemma~\ref{lemmaright BJ} that there exists some
$m\in\mathbb{N}$ such that
$y\perp_B^{\varepsilon_{z_0,y}+\delta}z_k$ for all $k\geq m$. This
leads to a contradiction and thus the result follows.
\end{proof}

As an immediate consequence of the above theorem, we obtain the
following complete characterization of the C-approximate
right-symmetry of elements of $S_X$ for any finite-dimensional
Banach space.

\begin{corollary}
Let $X$ be a finite-dimensional Banach space. Then any $y\in S_X$ is
C-approximately right-symmetric if and only if the property {\rm (P)}
holds for $X$.
\end{corollary}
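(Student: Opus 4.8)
The plan is to specialize Theorem~\ref{C-approximate right-symmetry} to the set $\mathcal{A}=S_X$. Since $X$ is finite-dimensional, $S_X$ is compact, so this is a legitimate choice of compact subset, and the theorem then reads: every $y\in S_X$ is C-approximately right-symmetric on $S_X$ if and only if the local property {\rm (P)} on $S_X$ holds for each $x\in S_X$. The whole task therefore reduces to checking that, for $\mathcal{A}=S_X$, both sides of this equivalence collapse to the unqualified notions appearing in the statement.

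For the left-hand side I would verify that C-approximate right-symmetry on $S_X$ coincides with (plain) C-approximate right-symmetry. The unqualified notion quantifies over all $y\in X$ with $y\perp_B x$, whereas the version on $S_X$ only tests $y\in S_X$. But both $\perp_B$ and $\perp_B^{\varepsilon}$ are homogeneous in $y$, so for any $y\neq 0$ one has $y\perp_B x\Leftrightarrow \tfrac{y}{\|y\|}\perp_B x$ and $x\perp_B^{\varepsilon}y\Leftrightarrow x\perp_B^{\varepsilon}\tfrac{y}{\|y\|}$, while the case $y=0$ is vacuous. Hence a single $\varepsilon_x$ works for all $y\in S_X$ exactly when it works for all $y\in X$, and the two notions agree.

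For the right-hand side I would recall that, by definition, $\mathscr{A}(x)\subseteq S_X$, so $x^\perp\cap\mathscr{A}(x)\cap S_X=x^\perp\cap\mathscr{A}(x)$. Thus the local property {\rm (P)} on $S_X$ for $x$ is literally the local property {\rm (P)} for $x$, and demanding it for every $x\in S_X$ is precisely property {\rm (P)} of $X$. Combining these two reductions with Theorem~\ref{C-approximate right-symmetry} yields the corollary.

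I do not expect a genuine obstacle here: the statement is a direct corollary, and finite-dimensionality enters only through the compactness of $S_X$ needed to invoke Theorem~\ref{C-approximate right-symmetry}. The only points requiring care are the homogeneity reduction on the orthogonality side and the observation $\mathscr{A}(x)\subseteq S_X$ on the geometric side.
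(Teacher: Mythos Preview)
Your proposal is correct and matches the paper's approach: the paper presents this corollary as an immediate consequence of Theorem~\ref{C-approximate right-symmetry} with no further proof, and you have simply spelled out the specialization $\mathcal{A}=S_X$ together with the routine reductions (homogeneity and $\mathscr{A}(x)\subseteq S_X$) that make this specialization yield the unqualified statement.
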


Now, we present a complete characterization of the C-approximate
symmetry of the Birkhoff-James orthogonality in finite-dimensional
polyhedral Banach spaces. In the proof of this characterization 
we will use the following result.

\begin{lemma}\label{same norming}
Let $X$ be a finite-dimensional polyhedral Banach space. Then for
any sequence $\{x_n\}\subseteq S_X$, we can find a sub-sequence
$\{x_{n_k}\}\subseteq\{x_n\}$ such that $J(x_{n_i})=J(x_{n_j})$ for
all $i,j\in\mathbb{N}$.
\end{lemma}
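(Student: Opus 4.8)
The plan is to reduce the lemma to a finiteness statement and then apply the pigeonhole principle. Concretely, it suffices to show that as $x$ ranges over $S_X$, the family $\{J(x):x\in S_X\}$ consists of only finitely many distinct sets. Granting this, for any given sequence $\{x_n\}\subseteq S_X$ the map $n\mapsto J(x_n)$ takes values in a finite collection, so at least one value is attained for infinitely many indices $n$; those indices furnish a subsequence $\{x_{n_k}\}$ along which $J(x_{n_k})$ is constant, which is precisely the assertion.

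Thus the entire content lies in proving that $\{J(x):x\in S_X\}$ is finite. Since $X$ is finite-dimensional and polyhedral, $S_X$ is a polyhedron with only finitely many facets $F_1,\dots,F_m$, and by the remark following the definition of a supporting functional to a facet, each $F_j$ carries a unique norm-one functional $f_j$ with $f_j\equiv 1$ on $F_j$; equivalently $f_j(x)=1$ exactly when $x\in F_j$. For $x\in S_X$ put $I(x)=\{j:x\in F_j\}\subseteq\{1,\dots,m\}$. I would then establish
\begin{align*}
J(x)=\mbox{conv}\,\{f_j:j\in I(x)\},
\end{align*}
so that $J(x)$ is completely determined by the index set $I(x)$. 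As there are at most $2^m$ subsets of $\{1,\dots,m\}$, this immediately yields the required finiteness. The inclusion $\supseteq$ is routine: for $j\in I(x)$ we have $f_j(x)=1=\|x\|$, so $f_j\in J(x)$, and $J(x)=\{f\in B_{X^*}:f(x)=1\}$ is convex, being an exposed face of the dual ball.

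The step I expect to be the main obstacle is the reverse inclusion $J(x)\subseteq\mbox{conv}\{f_j:j\in I(x)\}$, i.e.\ that every supporting functional at $x$ is a convex combination of the facet functionals passing through $x$. The cleanest route is polyhedral duality: since $B_X$ is a polytope with $0$ in its interior, its polar $B_{X^*}=B_X^\circ$ is again a polytope whose extreme points are exactly the facet functionals $f_1,\dots,f_m$ of $B_X$. Consequently $J(x)$, being the face $\{f\in B_{X^*}:f(x)=1\}$ of this polytope, equals the convex hull of those extreme points it contains, namely the $f_j$ with $f_j(x)=1$, that is, with $j\in I(x)$. In fact, once one observes that $B_{X^*}$ is a polytope, the finiteness of $\{J(x):x\in S_X\}$ already follows abstractly from the fact that a polytope has only finitely many faces and that each $J(x)$ is such a face, so the explicit decomposition above may be bypassed if one prefers a shorter argument.
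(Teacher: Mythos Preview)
Your proof is correct and is organized more cleanly than the paper's. The paper argues by a case split: first it passes to a subsequence of smooth points lying in the interior of a single facet (where $J$ is a common singleton); if that is impossible, it passes to non-smooth points, then either to a constant subsequence at a fixed extreme point or to a subsequence lying in the interior of a single edge, invoking \cite[Theorem~2.1]{SPBB} to conclude that interior points of the same edge share the same $J$. You bypass this stratification by proving directly that $\{J(x):x\in S_X\}$ is finite---either via the identity $J(x)=\mbox{conv}\{f_j:j\in I(x)\}$, which is exactly the content of the cited result from \cite{SPBB}, or more abstractly by noting that each $J(x)$ is an exposed face of the polytope $B_{X^*}$ and a polytope has only finitely many faces---and then invoking pigeonhole. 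So the two proofs rest on the same underlying fact; yours packages it uniformly and avoids the case analysis. A side benefit is dimensional robustness: the paper's trichotomy smooth/extreme/edge-interior is literally exhaustive only when $\dim X\leq 3$, and in higher dimensions one must tacitly extend ``edge'' to ``proper face'' to cover points in the relative interior of a $k$-face with $1<k<\dim X-1$; your argument needs no such adjustment.
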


\begin{proof}
If infinitely many elements of $\{x_n\}$ are smooth points of $S_X$, 
then by the fact that there are only finitely many faces in $S_X$,
we can find a sub-sequence $\{x_{n_k}\}\subseteq
\{x_n\}\cap\mbox{sm}~S_X$ such that all the elements of the
sub-sequence $\{x_{n_k}\}$ lie in the interior of the same facet of
$S_X$. Thus there exists $f\in S_{X^*}$ such that $J(x_{n_k})=\{f\}$
for all $k\in\mathbb{N}$.

Without loss of generality we now assume that all elements of
$\{x_n\}$ are non-smooth points. If there exists $x_0\in
\mbox{Ext}\,B_X$ such that $x_n=x_0$ for infinitely many $n$, then
clearly the result follows. If not, then we can choose an edge of
$S_X$ such that its interior contains infinitely many $x_n$, as $S_X$ has only finitely many edges. It follows from \cite[Theorem 2.1]{SPBB} that every supporting functional of any $z\in S_X$ is a convex combination of the supporting functionals of the facets containing $z$. Thus the points lying in the interior of the same edge have identical set of support functionals and hence the result follows.
\end{proof}

\begin{theorem}\label{C-approximate left-symmetry}
Let $X$ be a finite-dimensional polyhedral Banach space. Then the
following properties are equivalent:
\begin{itemize}
\item[(a)] the Birkhoff-James orthogonality is C-approximately symmetric
in $X$;
\item[(b)] any $y\in S_X$ is $C$-approximately left-symmetric;
\item[(c)] the property {\rm (P)} holds for $X$;
\item[(d)] the local property {\rm (P)} holds for all $x\in {\rm Ext}\,B_X$.
\end{itemize}
\end{theorem}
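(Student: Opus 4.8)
The plan is to prove the cycle $(a)\Rightarrow(b)\Rightarrow(c)\Rightarrow(d)\Rightarrow(c)\Rightarrow(a)$, where every implication except the last is immediate from material already established and the real work lies in $(c)\Rightarrow(a)$. The implication $(a)\Rightarrow(b)$ is trivial, since a single global constant $\varepsilon$ serves simultaneously as each local constant $\varepsilon_x$. For $(b)\Rightarrow(c)$, I invoke Lemma~\ref{C-local-left-symmetry}: the C-approximate left-symmetry of $x$ yields, for every $y\in x^\perp\cap S_X$, a constant $\varepsilon_{x,y}=\varepsilon_x<1$ with $y\perp_B^{\varepsilon_x}x$, which is exactly the condition equivalent to the local property {\rm (P)} at $x$; as this holds for all $x\in S_X$, property {\rm (P)} holds for $X$. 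The implication $(c)\Rightarrow(d)$ is trivial because ${\rm Ext}\,B_X\subseteq S_X$, and $(d)\Rightarrow(c)$ is precisely Theorem~\ref{extreme points}. Thus the entire content of the theorem collapses to $(c)\Rightarrow(a)$.

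For $(c)\Rightarrow(a)$ I first reformulate the goal quantitatively through \eqref{Bchar}. For $x,y\in S_X$ the least $\varepsilon$ with $y\perp_B^\varepsilon x$ equals $m(y,x):=\min_{f\in J(y)}|f(x)|$ (the minimum being attained since $J(y)$ is compact), and the local property {\rm (P)} at $x$ says exactly that $m(y,x)<1$ for every $y\in x^\perp\cap S_X$. Hence $(a)$ is the single assertion that $\varepsilon:=\sup\{m(y,x):x,y\in S_X,\ x\perp_B y\}<1$, while $(c)$ only furnishes the pointwise bound $m(y,x)<1$. The task is therefore to upgrade a pointwise inequality to a uniform one.

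I would argue by contradiction, in the spirit of Theorem~\ref{theorem approximate symmetric}: choose $x_n\perp_B y_n$ in $S_X$ with $m(y_n,x_n)\to 1$. The obstruction is that, contrary to the D-case, there is no left-stability of $\perp_B^\varepsilon$ (see the example following Lemma~\ref{lemmaright BJ}), so Theorem~\ref{neighbourhood} is not available. Its role is taken over by Lemma~\ref{same norming}: after passing to a subsequence I may assume $J(y_n)=\mathcal J$ is one fixed face of $B_{X^*}$. Then $m_{\mathcal J}(\cdot):=\min_{f\in\mathcal J}|f(\cdot)|$ is a single continuous function, and extracting convergent subsequences $x_n\to x_0$ and $y_n\to y_0$ in $S_X$ gives $m_{\mathcal J}(x_0)=1$, i.e. $|f(x_0)|=1$ for every $f\in\mathcal J$, together with $x_0\perp_B y_0$ and $\mathcal J\subseteq J(y_0)$ (each $f\in\mathcal J$ satisfies $f(y_n)=1$, hence $f(y_0)=1$).

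To contradict the local property {\rm (P)} at $x_0$ I must produce a single $\tilde y\in x_0^\perp\cap S_X$ lying in $\mathscr A(x_0)$. Let $F$ be the face of $S_X$ associated with $\mathcal J$, so that $J(\tilde y)=\mathcal J$ for $\tilde y$ in the relative interior of $F$; for any such $\tilde y$ the relation $|f(x_0)|=1$ for all $f\in\mathcal J=J(\tilde y)$ already forces $\tilde y\in\mathscr A(x_0)$, so it suffices to find $\tilde y$ in the relative interior of $F$ with $x_0\perp_B\tilde y$. Taking James' witnesses $h_n\in J(x_n)$ with $h_n(y_n)=0$ and $y_n$ in the relative interior of $F$, and passing to a limit $h_0\in J(x_0)$, I would analyse the linear functional $h_0$ on the polytope $F$: if its zero set meets the relative interior of $F$, the required $\tilde y$ is found immediately and {\rm (P)} is violated. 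The main obstacle is precisely the degenerate case in which $y_0$ slips onto a proper subface of $F$ and $h_0$ attains the value $0$ only as an extremum on the boundary of $F$; there new supporting functionals of $y_0$ appear and could a priori rescue {\rm (P)}. Ruling this case out is the crux of the argument, and the step for which Lemma~\ref{same norming} was designed, since it forces the fixed set $\mathcal J$ to norm $x_0$ while the interior vanishing $h_n(y_n)=0$ with $y_n$ in the relative interior of $F$ and the finiteness of the face lattice must be combined to pull a genuine interior witness $\tilde y$ back out of the limit.
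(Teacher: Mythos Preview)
Your implications $(a)\Rightarrow(b)\Rightarrow(c)\Leftrightarrow(d)$ are correct and match the paper. The paper, however, closes the cycle differently: it proves $(c)\Rightarrow(b)$ and then $(b)\Rightarrow(a)$ as two separate compactness arguments, each using Lemma~\ref{same norming}, and the second one also relying on the already-established $(b)$ (and implicitly on Theorem~\ref{C-approximate right-symmetry}). You attempt a direct $(c)\Rightarrow(a)$, which is a legitimate alternative, but your final paragraph stops short of a proof: you yourself identify the degenerate boundary case as ``the crux of the argument'' and then only gesture at how the tools ``must be combined'' without actually combining them.

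The concrete gap is that you apply Lemma~\ref{same norming} only to $\{y_n\}$, fixing $\mathcal J=J(y_n)$, and then try to squeeze a witness out of the \emph{limit} functional $h_0$ on the face $F$; this is exactly what traps you in the boundary case. The missing move is to apply Lemma~\ref{same norming} \emph{also to $\{x_n\}$}, fixing $\mathcal K=J(x_n)$. Since $g(x_n)=1$ for every $g\in\mathcal K$ and $x_n\to x_0$, one has $\mathcal K\subseteq J(x_0)$; in particular each James witness $h_n\in\mathcal K$ already lies in $J(x_0)$. Hence $h_n(y_n)=0$ with $h_n\in J(x_0)$ gives $x_0\perp_B y_n$ directly, and since $J(y_n)=\mathcal J$ with $|f(x_0)|=1$ for all $f\in\mathcal J$, the element $y_n$ itself lies in $x_0^\perp\cap\mathscr A(x_0)$, contradicting {\rm (P)} at $x_0$. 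No limit $h_0$, no boundary analysis, is needed. Once this step is supplied, your direct route is in fact shorter than the paper's: the paper also fixes both $J(x_n)$ and $J(y_n)$, then deduces $x_{n_i}\perp_B y_{n_j}$ for all $i,j$ (using that equal $J$-sets give equal orthogonal complements), passes to $x_0\perp_B y_{n_j}$, invokes $(b)$ to obtain a uniform $\varepsilon_{x_0}$ with $y_{n_j}\perp_B^{\varepsilon_{x_0}}x_0$, and finally transfers this to $x_{n_j}$ via Lemma~\ref{lemmaright BJ}.
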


\begin{proof}
Observe that $(a)$ trivially implies $(b)$.

Let us now prove $(b)\Rightarrow(c)$. Suppose on the contrary that
there exists $x\in S_X$ such that the local property (P) fails for
$x$. Thus there exists some $y\in x^\perp\cap\mathscr{A}(x)$. Now,
$y\in \mathscr{A}(x)$ implies that if $g\in J(y)$, then $|g(x)|=1$.
Thus there does not exist any $\varepsilon\in[0,1)$ such that
$y\perp_B^\varepsilon x$. This contradicts that $x$ is
C-approximately left-symmetric and $(c)$ follows.

$(c)\Rightarrow(d)$ is obvious and $(d)\Rightarrow(c)$
follows from Theorem~\ref{extreme points}.

We now show that $(c)\Rightarrow(b)$. Let $y\in S_X$ and $z\in
y^\perp\cap S_X$; then from Lemma~\ref{C-local-left-symmetry} it
follows that $z\perp_B^{\varepsilon_{y,z}} y$ for some
$\varepsilon_{y,z}\in[0,1)$. Let $\varepsilon_{y,z}^*$ be the
infimum of all such $\varepsilon_{y,z}$. Let
$\varepsilon_y=\underset{z\in y^\perp \cap
S_X}{\sup}\varepsilon_{y,z}^*$, then $\varepsilon_y \leq 1$. If
$\varepsilon_y=1$, then we can find $\{z_n\}\subseteq  y^\perp\cap
S_X$ such that $z_n\perp_{B} ^{\varepsilon_n^*}y$ for
$\varepsilon_n\nearrow 1$. Now, from the compactness of $S_X$ we can
find a convergent sub-sequence of $\{z_n\}$, which we again denote by
$\{z_n\}$, and let $z_n\longrightarrow z_0$. Then by continuity of the
norm it follows that $z_0\in y^\perp \cap S_X$. Using
Lemma~\ref{C-local-left-symmetry}, it follows that
$z_0\perp_B^{\varepsilon_{y,z_0}} y$ for some
$\varepsilon_{y,z_0}\in[0,1)$.  If $z_n=z_0$ for infinitely many
$n$'s, then clearly we obtain a contradiction. Thus without loss of
generality we can assume that $z_n\not=z_0$ for all
$n\in\mathbb{N}$.

Now, it follows from Lemma~\ref{same norming} that we can choose a
sub-sequence $\{z_{n_k}\}\subseteq \{z_n\}$ such that
$J(z_{n_i})=J(z_{n_j})$ for all $i,j\in\mathbb{N}$.

Let $\varepsilon_1\in[0,1)$ be such that
$z_{n_1}\perp_B^{\varepsilon_1} y$. Then the choice of the
sub-sequence $\{z_{n_k}\}$ will ensure that
$z_{n_k}\perp_B^{\varepsilon_1} y$ for all $k\geq 1$. This leads to
a contradiction. Thus $\varepsilon_y < 1$, $z\perp_B^{\varepsilon_y}
y$ and hence $y$ is C-approximately left-symmetric.

Now, we show that $(b)\Rightarrow(a)$. Suppose on the contrary that
the Birkhoff-James orthogonality is not C-approximately symmetric in
$X$. From the equivalence of $(b)$ and $(c)$, it follows that each
$x\in S_X$ has the local property (P). If $x,y\in S_X$ are such that
$x\perp_B y$, then it follows by Lemma~\ref{C-local-left-symmetry}
that there exists $\varepsilon_{x,y}\in[0,1)$ such that
$y\perp_B^{\varepsilon_{x,y}}x$. Let $\{x_n\}$, $\{y_n\}\subseteq
S_X$ be such that $x_n\perp_B y_n$,
$y_n\perp_{B}^{\varepsilon_n^*}x_n$ for $\varepsilon_n\nearrow 1$.
Since $S_X$ is compact, it follows that there exist convergent
sub-sequences of $\{x_n\}$, $\{y_n\}$, which we again denote by
$\{x_n\}$, $\{y_n\}$, respectively. Let $x_0,y_0\in S_X$ be such
that $x_n\longrightarrow x_0$ and $y_n\longrightarrow y_0$. Now,
from the continuity of the norm it follows that $x_0\perp_B y_0$.
Using Lemma~\ref{C-local-left-symmetry}, we can find
$\varepsilon_{x_0,y_0}\in[0,1)$ such that
$y_0\perp_B^{\varepsilon_{x_0,y_0}}x_0$.

We now prove that we can choose $\{x_n\}$ such that $x_n\not=x_0$
for almost all $n\in\mathbb{N}$. If there exists a sub-sequence
$\{x_{n_k}\}\subseteq \{x_n\}$ such that $x_{n_k}=x_0$ for all
$k\in\mathbb{N}$, then $x_0\perp_B y_{n_k}$ for all $k\in\mathbb{N}$.
Then by taking $\mathcal{A}=S_X$ in Theorem~\ref{C-approximate
right-symmetry}, it follows that
$y_{n_k}\perp_B^{\varepsilon_{x_0}}x_0$ for some
$\varepsilon_{x_0}\in[0,1)$ and for all $k\in\mathbb{N}$. Thus
$y_{n_k}\perp_B^{\varepsilon_{x_0}}x_{n_k}$ for some
$\varepsilon_{x_0}\in[0,1)$ and for all $k\in\mathbb{N}$. Clearly,
this contradicts that $y_n\perp_{B}^{\varepsilon_n^*}x_n$ for
$\varepsilon_n\nearrow 1$. Thus we can assume that $x_n\not=x_0$ for
almost all $n\in\mathbb{N}$. Also, by using the similar arguments we
can assume that $y_n\not=y_0$ for almost all $n\in\mathbb{N}$.

It follows from Lemma~\ref{same norming} that we can find
sub-sequences $\{x_{n_k}\}$, $\{y_{n_k}\}$ of $\{x_n\}$, $\{y_n\}$,
respectively, such that $J(x_{n_i})=J(x_{n_j})$ and
$J(y_{n_i})=J(y_{n_j})$ for all $i,j\in\mathbb{N}$. Observe that
$x_{n_k}\perp_B y_{n_k}$ for all $k\in\mathbb{N}$ and each $x\in
S_X$ has local property (P) which implies the following:
\begin{itemize}
\item[(i)] if $\{x_{n_k}\}$, $\{y_{n_k}\}\subseteq S_X\setminus (\mbox{sm}~S_X\cup \mbox{Ext}\,B_X)$, then
elements of $\{x_{n_k}\}$ and $\{y_{n_k}\}$ lie in  the interiors of
different edges of $S_X$.
\item[(ii)] for all other cases elements of $\{x_{n_k}\}$,
$\{y_{n_k}\}$ lie on different facets of $S_X$.
\end{itemize}

Now, from the choice of the sub-sequences $\{x_{n_k}\}$ and
$\{y_{n_k}\}$ it follows that
\begin{align*}
x_{n_i}\perp_B y_{n_j}~~\mbox{for~all~}i,j\in\mathbb{N}.
\end{align*}

Thus $x_0\perp_B y_{n_j}$ for all $j\in\mathbb{N}$. Using $(b)$ we
can find $\varepsilon_{x_0}\in[0,1)$ such that
$y_{n_j}\perp_B^{\varepsilon_{x_0}} x_0$ for all $j\in\mathbb{N}$.

Let $f\in J(y_{n_1})$ be such that $|f(x_0)|\leq \varepsilon_{x_0}$.
Now, if we choose $\delta\in(0,1-\varepsilon_{x_0})$, then for  $z\in
B(x_0,\delta)\cap S_X$, we have,
\begin{align*}
|f(z)|\leq |f(z)-f(x_0)|+|f(x_0)|\leq \delta+\varepsilon_{x_0}.
\end{align*}

The choice of $\{y_{n_k}\}$ implies that $f\in J(y_{n_j})$ for all
$j\in\mathbb{N}$. Thus we can find $m\in\mathbb{N}$ such that
\begin{align*}
y_{n_j}\perp_B^{\varepsilon_{x_0}+\delta}
x_{n_j}~~\mbox{for~all~}~j\geq m.
\end{align*}

This clearly contradicts that $x_n\perp_{B}^{\varepsilon_n^*}y_n$
for $\varepsilon_n\nearrow 1$. Thus the Birkhoff-James orthogonality
is C-approximately symmetric in $X$.
\end{proof}

In view of Theorem~\ref{C-approximate right-symmetry} and
Theorem~\ref{C-approximate left-symmetry} we would like to propose
the following conjecture.

\begin{conjecture}\label{symmetry BJ}
Let $X$ be a finite-dimensional Banach space. Then the
Birkhoff-James orthogonality is C-approximately symmetric in $X$ if
and only if the local property (P) holds for each $x\in\mbox{Ext}\,B_X$.
\end{conjecture}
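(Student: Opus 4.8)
The plan is to split the equivalence and to reuse, as far as possible, the apparatus built for the polyhedral case. The forward implication holds in complete generality and is immediate: if the Birkhoff--James orthogonality is C-approximately symmetric in $X$, then a single $\varepsilon\in[0,1)$ satisfies $y\perp_B^{\varepsilon}x$ whenever $x\perp_B y$, so for every $x\in S_X$ and every $y\in x^\perp\cap S_X$ we may take $\varepsilon_{x,y}=\varepsilon$. By the sufficiency part of Lemma~\ref{C-local-left-symmetry}, the local property~{\rm (P)} then holds for every $x\in S_X$, in particular for every $x\in{\rm Ext}\,B_X$. Thus the entire content of the conjecture is the reverse implication.

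For the reverse implication I would first invoke Theorem~\ref{extreme points} to pass from the local property~{\rm (P)} at extreme points to the property~{\rm (P)} on all of $S_X$, and then phrase the remaining task through the single quantity
\begin{align*}
\phi(x,y)=\inf_{f\in J(y)}|f(x)|,\qquad (x,y)\in D:=\{(x,y)\in S_X\times S_X:\ x\perp_B y\}.
\end{align*}
Since $J(y)$ is compact, the infimum is attained and $y\perp_B^{\phi(x,y)}x$ is the sharp approximate orthogonality, by~\eqref{Bchar}. The set $D$ is compact, the property~{\rm (P)} is exactly the pointwise assertion $\phi<1$ on $D$, while C-approximate symmetry of $X$ is exactly the uniform assertion $\sup_D\phi<1$. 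So the whole problem is to upgrade a pointwise strict inequality on a compact set to a uniform one.

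This upgrade would be automatic if $\phi$ were upper semicontinuous, for then $\sup_D\phi$ would be attained at a point of $D$ where $\phi<1$. However, $\phi$ is in general only \emph{lower} semicontinuous (a consequence of the upper hemicontinuity, i.e. the closed graph, of the duality map $y\mapsto J(y)$), and this is the step I expect to be the genuine obstacle. Concretely, running the contradiction scheme of $(b)\Rightarrow(a)$ in Theorem~\ref{C-approximate left-symmetry} one takes $(x_n,y_n)\in D$ with $\phi(x_n,y_n)\to 1$, passes to limits $x_n\to x_0$, $y_n\to y_0$ with $x_0\perp_B y_0$, and extracts from the property~{\rm (P)} at $x_0$ a functional $f\in J(y_0)$ with $|f(x_0)|<1$; one then needs $f_n\in J(y_n)$ with $|f_n(x_n)|$ bounded away from $1$. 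The perturbation $x_0\rightsquigarrow x_n$ sits in the second slot of $y\perp_B^{?}x$ and is harmless by Lemma~\ref{lemmaright BJ}, but the perturbation $y_0\rightsquigarrow y_n$ sits in the first slot and is unstable, exactly as the Example following Lemma~\ref{lemmaright BJ} shows. Upper semicontinuity of $\phi$ would follow from a lower-hemicontinuous selection $f_n\in J(y_n)$ approximating the prescribed $f\in J(y_0)$, and it is precisely this selection that fails for a general norm. In the polyhedral case Lemma~\ref{same norming} removes the difficulty at a stroke: along a subsequence $J(y_{n_k})=J(y_0)$, so the very same $f$ lies in each $J(y_{n_k})$.

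To settle the conjecture one must therefore replace Lemma~\ref{same norming} by something valid beyond polyhedral spaces. Two routes seem natural. First, one could try to show that the standing hypothesis, the property~{\rm (P)}, by itself forces enough regularity of $y\mapsto J(y)$ along the offending sequences to recover the selection $f_n\in J(y_n)$; a plausible implementation stratifies $S_X$ by the dimension of the exposed face through each point and runs a semicontinuous selection on each stratum. Second, and more robustly, one could seek a direct geometric estimate bounding $|f_n(x_n)|$ away from $1$ using only the content of the property~{\rm (P)} at $x_0$ --- that no $g\in J(z)$ with $z\in x_0^\perp\cap S_X$ has $\pm g\in J(x_0)$ --- rather than transporting a fixed functional from $y_0$ to $y_n$. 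Deciding whether either route can be pushed through, or instead producing a finite-dimensional counterexample, is precisely what the conjecture leaves open.
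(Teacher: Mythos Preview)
The statement you are discussing is a \emph{conjecture} in the paper, not a theorem; the paper offers no proof of it. Immediately after stating it, the authors only remark that the analogous assertion fails in infinite dimensions (citing an infinite-dimensional smooth space from \cite{CW1} which satisfies property~{\rm (P)} but is not C-approximately symmetric), and then move on. So there is no ``paper's own proof'' to compare against.

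Your write-up is accordingly not a proof either, and you say so explicitly in the last sentence. What you have produced is a clear and accurate diagnosis of why the polyhedral argument of Theorem~\ref{C-approximate left-symmetry} does not carry over: the forward implication is trivial (your argument via Lemma~\ref{C-local-left-symmetry} is correct and is exactly the $(a)\Rightarrow(b)\Rightarrow(c)$ chain in that theorem), while for the reverse implication the crucial step in the polyhedral proof is Lemma~\ref{same norming}, which furnishes a subsequence with $J(y_{n_i})=J(y_{n_j})$ and thereby sidesteps the first-slot instability illustrated after Lemma~\ref{lemmaright BJ}. Your reformulation through $\phi(x,y)=\min_{f\in J(y)}|f(x)|$ and the observation that the issue is precisely the failure of upper semicontinuity of $\phi$ is a clean way to isolate the obstacle, and your two suggested routes are reasonable directions, but neither is carried out. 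In short: your analysis matches the paper's implicit reasoning for why the result is stated as a conjecture rather than a theorem; there is no gap to point to because you do not claim a proof.
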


The above conjecture is not true in general. In \cite[Example
3.7]{CW1}, an infinite-dimensional smooth space (hence the local
property (P) holds for each $x\in S_X$), which is not
C-approximately symmetric was constructed.

\bigskip

In \cite[Theorem 4.2]{CW1}, Chmieli\'{n}ski and W\'{o}jcik proved
that in finite-dimensional smooth Banach spaces, the Birkhoff-James
orthogonality is C-approximately symmetric. We now generalize the
result by proving that in uniformly smooth Banach spaces, the
Birkhoff-James orthogonality is C-approximately symmetric on any
compact subset of $S_X$.
\begin{theorem}
Let $X$ be a uniformly smooth Banach space and let
$\mathcal{A}\subseteq S_X$ be a compact subset. Then the
Birkhoff-James orthogonality is C-approximately symmetric on
$\mathcal{A}$.
\end{theorem}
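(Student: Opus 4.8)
The plan is to argue by contradiction, combining the compactness of $\mathcal{A}$ with the fact that uniform smoothness forces the support mapping to be norm-to-norm continuous on the sphere. First I would record two consequences of uniform smoothness. Since $X$ is uniformly smooth it is in particular smooth, so for every $u\in S_X$ the set $J(u)$ is a singleton $J(u)=\{f_u\}$; moreover the support mapping $u\mapsto f_u$ is (uniformly) norm-to-norm continuous from $S_X$ to $S_{X^*}$, which is the well-known reformulation of uniform smoothness via uniform Fréchet differentiability of the norm (equivalently, via uniform convexity of $X^*$). Because each $J(y)$ is a singleton, the characterization (\ref{Bchar}) simplifies on $S_X$: for $x,y\in S_X$ we have $y\perp_B^{\varepsilon}x$ if and only if $|f_y(x)|\leq\varepsilon$, so the least admissible $\varepsilon$ equals $|f_y(x)|$.

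Next I set up the contradiction exactly as in the proofs of Theorem~\ref{C-approximate right-symmetry} and Theorem~\ref{C-approximate left-symmetry}. Suppose the Birkhoff--James orthogonality is not C-approximately symmetric on $\mathcal{A}$. Then for each $n$ I can pick $x_n,y_n\in\mathcal{A}$ with $x_n\perp_B y_n$ but $y_n\not\perp_B^{\,1-1/n}x_n$, which by the observation above means $|f_{y_n}(x_n)|>1-1/n$, hence $|f_{y_n}(x_n)|\to 1$. By the compactness of $\mathcal{A}$ I pass to subsequences (still denoted $\{x_n\},\{y_n\}$) converging to points $x_0,y_0\in\mathcal{A}\subseteq S_X$. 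Continuity of the norm shows that Birkhoff--James orthogonality is preserved under limits, so $x_0\perp_B y_0$; by smoothness $J(x_0)=\{f_{x_0}\}$, and James' characterization gives $f_{x_0}(y_0)=0$.

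I then compute the limit of $|f_{y_n}(x_n)|$ using continuity of the support map. Writing $f_{y_n}(x_n)=f_{y_n}(x_0)+f_{y_n}(x_n-x_0)$, the second summand tends to $0$ since $\|f_{y_n}\|=1$ and $x_n\to x_0$, while $f_{y_n}\to f_{y_0}$ in norm (continuity of $u\mapsto f_u$) forces $f_{y_n}(x_0)\to f_{y_0}(x_0)$. Thus $|f_{y_0}(x_0)|=\lim_n|f_{y_n}(x_n)|=1$. Since $\|f_{y_0}\|=1=\|x_0\|$ and $|f_{y_0}(x_0)|=1$, the functional $f_{y_0}$ or $-f_{y_0}$ supports $B_X$ at $x_0$, so by smoothness $f_{y_0}=\pm f_{x_0}$. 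But then $1=f_{y_0}(y_0)=\pm f_{x_0}(y_0)=0$, a contradiction. Hence no such sequences exist and the orthogonality is C-approximately symmetric on $\mathcal{A}$.

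The step I expect to be the crux is the passage to the limit of $|f_{y_n}(x_n)|$, and this is precisely where the hypothesis of uniform smoothness is indispensable. In the polyhedral setting of Theorem~\ref{C-approximate left-symmetry} the analogous control of the norming functionals was obtained through the combinatorial Lemma~\ref{same norming} on common norming functionals; here the smooth geometry, via continuity of the support mapping, plays that role. Without some form of continuity of $u\mapsto f_u$ the value $f_{y_n}(x_n)$ could jump in the limit and the whole argument would collapse, which is why the bare assumption of smoothness is strengthened to uniform smoothness (the point $x_0$ being fixed in the decomposition, even norm-to-weak$^*$ continuity would in fact suffice, but I will invoke the cleaner uniform norm continuity that uniform smoothness directly provides).
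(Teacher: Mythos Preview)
Your argument is correct and follows essentially the same route as the paper: a contradiction via compactness of $\mathcal{A}$ combined with norm-to-norm continuity of the support map $u\mapsto f_u$ (which the paper proves explicitly from uniform convexity of $X^*$, while you quote it). The only cosmetic difference is in the endgame: the paper carries out an explicit $\varepsilon$--$\delta$ estimate to show $|f_{y_n}(x_n)|\leq \delta+\varepsilon+\varepsilon_1<1$ for large $n$, whereas you pass directly to the limit, obtain $|f_{y_0}(x_0)|=1$, and derive the contradiction from smoothness via $f_{y_0}=\pm f_{x_0}$ and $f_{x_0}(y_0)=0$.
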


\begin{proof}
For $x\in S_X$ let $J(x)=\{f_x\}$. First we claim that if
$\{x_n\}\subseteq S_X$ and $x\in S_X$ such that $x_n\longrightarrow
x$, then $f_{x_n}\longrightarrow f_x$. Suppose on the contrary that
$x_n\longrightarrow x$ and $f_{x_n}\not\longrightarrow f_x$. Then
for given $\varepsilon>0$ there exists $\{x_{n_k}\}\subseteq\{x_n\}$
such that $\|f_{x_{n_k}}-f_x\|>\varepsilon$ for all
$k\in\mathbb{N}$. Clearly, $\varepsilon<2$. Since $X$ is uniformly
smooth, $X^*$ is uniformly convex. Thus there exists
$\delta(\varepsilon)>0$ such that
$\|f_{x_{n_k}}+f_x\|<2-\delta(\varepsilon)$ for all
$k\in\mathbb{N}$. Also, for all $k\in\mathbb{N}$, we have,
\begin{align*}
\|f_{x_{n_k}}+f_x\|\geq
f_{x_{n_k}}(x_{n_k})+f_x(x_{n_k})=1+f_x(x_{n_k}).
\end{align*}
As $x_{n_k}\longrightarrow x$, we get,
\begin{align*}
2\leq \lim_{k\longrightarrow\infty}\|f_{x_{n_k}}+f_x\|\leq
2-\delta(\varepsilon).
\end{align*}
This leads to a contradiction and thus our claim follows.

It follows from the smoothness of $X$ that any $x\in S_X$ has local
property (P). If $x$, $y\in S_X$ are such that $x\perp_B y$, then by
using Lemma~\ref{C-local-left-symmetry}, we can find
$\varepsilon_{x,y}\in[0,1)$ such that $y\perp_B^{\varepsilon_{x,y}}
x$.

Now, we will prove that the Birkhoff-James orthogonality is
C-approximately symmetric on any compact subset
$\mathcal{A}\subseteq S_X$. Suppose on the contrary that the
Birkhoff-James orthogonality is not C-approximately symmetric on
some compact subset $\mathcal{A}\subseteq S_X$. Thus we can find
$\{x_n\}$, $\{y_n\}\subseteq \mathcal{A}$ such that $x_n\perp_B y_n$
and $y_n\perp_{B}^{\varepsilon_n^*} x_n$ for some
$\varepsilon_n\nearrow 1$.

From compactness of $\mathcal{A}$ we can find convergent
sub-sequences of $\{x_n\}$, $\{y_n\}$ which we again denote by
$\{x_n\}$ and $\{y_n\}$, respectively. Let $x_0$, $y_0\in
\mathcal{A}$ be such that $x_n\longrightarrow x_0$ and
$y_n\longrightarrow y_0$. Using continuity of the norm it follows
that $x_0\perp_B y_0$ and thus there exists $\varepsilon\in[0,1)$
such that $y_0\perp_B^\varepsilon x_0$.

Let $\varepsilon_1\in(0,1-\varepsilon)$. Then there exists
$m_1\in\mathbb{N}$ such that $\|f_{y_n}-f_{y_0 }\|\leq
\varepsilon_1$ for all $n\geq m_1$. Thus for all $n\geq m_1$, we
have,
\begin{align*}
|f_{y_n}(x_0)|\leq |f_{y_n}(x_0)-f_{y_0}(x_0)|+|f_{y_0}(x_0)|\leq
\varepsilon_1+\varepsilon
\end{align*}
and this implies
\begin{align*}
y_n\perp_B^{\varepsilon+\varepsilon_1}x_0.
\end{align*}
Now, if we choose $\delta\in (0,1-\varepsilon-\varepsilon_1)$, then
we can find $m_2\in\mathbb{N}$ such that $x_n\in B(x_0,\delta)$ for
all $n\geq m_2$.

Let $m=\max\{m_1,m_2\}$. Then for all $z\in B(x_0,\delta)$ and for
all $n\geq m$, we have,
\begin{align*}
|f_{y_n}(z)|\leq |f_{y_n}(z)-f_{y_n}(x_0)|+|f_{y_n}(x_0)|\leq
\|z-x_0\|+|f_{y_n}(x_0)|<\delta+\varepsilon+\varepsilon_1.
\end{align*}

Thus
\begin{align*}
y_n\perp_B^{\delta+\varepsilon+\varepsilon_1}x_n~~\mbox{for~all}~n\geq
m.
\end{align*}
This clearly leads to a contradiction and thus the result follows.
\end{proof}

\bigskip

For a normed linear space $X$, the following constant was defined in
\cite{SW}:
\begin{align*}
\mathcal{R}(X):=\sup\{\|x-y\|:\overline{xy}\subset S_X\}.
\end{align*}

\begin{remark}\label{remark RX}
If $X$ is a finite-dimensional polyhedral Banach space and
$\mathcal{R}(X)\leq 1$, then the local property (P) holds for each
$x\in \mbox{Ext}\,B_X$. To see this observe that if the local
property (P) fails for $x\in \mbox{Ext}\,B_X$, then there exists $y\in
S_X$ such that $x\perp_B y$ and if $f\in J(y)$, then either $f\in
J(x)$ or $-f\in J(x)$. This implies that one of the following holds
true:
\begin{itemize}
\item[(i)] $y$ lies in the interior of one of the associated edges
of $x$ or $-x$;
\item[(ii)] $y$ lies in the interior of one of the associated facets
of $x$ or $-x$.
\end{itemize}

Thus either $\overline{xy}\subset S_X$ or $\overline{-xy}\subset
S_X$. If $\overline{xy}\subset S_X$, then $x\perp_B y$ implies

\begin{align*}
\mathcal{R}(X)>\|x-y\|\geq\|x\|=1.
\end{align*}

Now, we assume that $\overline{-xy}\subset S_X$.  By the homogeneity
of orthogonality and $x\perp_B y$ we get $-x\perp_B y$. Thus,
\begin{align*}
\mathcal{R}(X)>\|(-x)-y\|\geq\|-x\|=1.
\end{align*}
\end{remark}

The following example shows that the converse is not true, that is,
a two-dimensional polyhedral Banach space satisfying the local
property (P) for each $x\in \mbox{Ext}\,B_X$ need not necessarily
satisfy $\mathcal{R}(X)\leq 1$. Consider a two-dimensional
polyhedral Banach space $X=\mathbb{R}^2$, whose unit sphere is
determined by the extreme points $v_1=(2,2)$, $v_2=(1,3)$,
$v_3=(0,3.5)$, $v_4=(-1,3)$, $v_5=(-2,2)$, $v_6=-v_1$, $v_7=-v_2$,
$v_8=-v_3$, $v_9=-v_4$, $v_{10}=-v_5$. For this space
$\mathcal{R}(X)>1$, Figures 3 and 4, given below show that the local
property (P) holds for each $x\in \mbox{Ext}\,B_X$. In Figure 3, $f$
is the supporting functional corresponding to the edge
$\overline{v_1v_{10}}$, $g$ is the supporting functional
corresponding to the edge $\overline{v_1v_2}$ and $h$ is the
supporting functional corresponding to the edge $\overline{v_2v_3}$.
In Figure 4, $h$ is the supporting functional corresponding to the
edge $\overline{v_3v_4}$, $g$ is the supporting functional
corresponding to the edge $\overline{v_4v_5}$ and $f$ is the
supporting functional corresponding to the edge $\overline{v_5v_6}$.

\begin{figure}
     \centering
     \begin{subfigure}{.50\textwidth}
         \begin{tikzpicture}[scale=0.88]
\draw (2,2) -- (1,3); \draw (0,3.5) --
(1,3); \draw (0,3.5) -- (-1,3); \draw (-2,2) -- (-1,3); \draw (-2,2)
-- (-2,-2); \draw (-2,-2) -- (-1,-3); \draw (0,-3.5) -- (1,-3);
\draw (2,-2) -- (1,-3); \draw (2,2) -- (2,-2); \draw (-1,-3) --
(0,-3.5); \draw [dashed](3.5,0) -- (-3.5,0);

\draw[dashed,blue] (0,5) -- (0,-5); \draw[dashed,blue] (2,4.5) --
(2,-4.5);

\draw[dashed,red] (-3,3) -- (3,-3); \draw[dashed,red] (4,0) --
(-1,5);

\draw[dashed,teal] (4,-2) -- (-4,2); \draw[dashed,teal] (4,1.5) --
(-2,4.5);

\draw [fill] (2,2) circle [radius=.08]; \draw [fill] (2,-2) circle
[radius=.08]; \draw [fill] (-2,2) circle [radius=.08]; \draw [fill]
(-2,-2) circle [radius=.08]; \draw [fill] (0,3.5) circle
[radius=.08]; \draw [fill] (0,-3.5) circle [radius=.08]; \draw
[fill] (1,3) circle [radius=.08]; \draw [fill] (-1,3) circle
[radius=.08]; \draw [fill] (-1,-3) circle [radius=.08]; \draw [fill]
(1,-3) circle [radius=.08];

\draw (2.4,2)  node {$v_1$}; \draw (2.4,-2)  node {$v_{10}$}; \draw
(-2.4,2)  node {$v_5$}; \draw (-2.4,-2)  node {$v_6$}; \draw (1.4,3)
node {$v_2$}; \draw (-1.4,3)  node {$v_4$}; \draw (-1.4,-3)  node
{$v_7$}; \draw (1.4,-3)  node {$v_9$}; \draw (-0.4,3.5)  node
{$v_3$}; \draw (-0.4,-3.5)  node {$v_8$};

\draw (2.4,-4) node{$f$}; \draw (0.7,-4) node{$\mbox{ker}~ f$};
\draw (-2.3,3.2) node{$\mbox{ker} ~g$}; \draw (-1,4.6) node{$g$};
\draw (-2,4.2) node{$h$}; \draw (-4.1,1.5) node{$\mbox{ker}~ h$};

\draw(0,-6) node{Figure 3};
\end{tikzpicture}

     \end{subfigure}
       \begin{subfigure}{.50\textwidth}
         \centering
         \begin{tikzpicture}[scale=0.88]
\draw (2,2) -- (1,3); \draw (0,3.5) -- (1,3); \draw (0,3.5) --
(-1,3); \draw (-2,2) -- (-1,3); \draw (-2,2) -- (-2,-2); \draw
(-2,-2) -- (-1,-3); \draw (0,-3.5) -- (1,-3); \draw (2,-2) --
(1,-3); \draw (2,2) -- (2,-2); \draw (-1,-3) -- (0,-3.5);

\draw[dashed,blue] (0,5) -- (0,-5); \draw[dashed,blue] (-2,4.5) --
(-2,-4.5);

\draw[dashed,red] (-4,0) -- (1,5); \draw[dashed,red] (3,3) --
(-3,-3);

\draw[dashed,teal] (-4,-2) -- (3.5,1.75); \draw[dashed,teal] (-4,1.5) --
(2,4.5); \draw[dashed] (3.5,0) -- (-3.5,0);

\draw [fill] (2,2) circle [radius=.08]; \draw [fill] (2,-2) circle
[radius=.08]; \draw [fill] (-2,2) circle [radius=.08]; \draw [fill]
(-2,-2) circle [radius=.08]; \draw [fill] (0,3.5) circle
[radius=.08]; \draw [fill] (0,-3.5) circle [radius=.08]; \draw
[fill] (1,3) circle [radius=.08]; \draw [fill] (-1,3) circle
[radius=.08]; \draw [fill] (-1,-3) circle [radius=.08]; \draw [fill]
(1,-3) circle [radius=.08];

\draw (2.4,2)  node {$v_1$}; \draw (2.4,-2)  node {$v_{10}$}; \draw
(-2.4,2)  node {$v_5$}; \draw (-2.4,-2)  node {$v_6$}; \draw (1.4,3)
node {$v_2$}; \draw (-1.4,3)  node {$v_4$}; \draw (-1.4,-3)  node
{$v_7$}; \draw (1.4,-3)  node {$v_9$}; \draw (-0.4,3.5)  node
{$v_3$}; \draw (-0.4,-3.5)  node {$v_8$};

\draw (0.7,-4) node{$\mbox{ker}~ f$}; \draw (-2.4,-4) node{$f$};
\draw (2.3,3) node{$\mbox{ker}~ g$}; \draw (0.5,4.8) node{$g$};
\draw (-3.5,-1.2) node{$\mbox{ker}~ h$}; \draw (-3.7,1.95) node{$h$};

\draw(0,-6) node{Figure 4};
\end{tikzpicture}
     \end{subfigure}

\end{figure}

As a consequence of Theorem~\ref{C-approximate left-symmetry} and
Remark~\ref{remark RX} we obtain the next result.

\begin{theorem}\label{RX}
Let $X$ be a finite-dimensional polyhedral Banach space such that $R(X)\leq 1$. Then the Birkhoff-James orthogonality is C-approximately symmetric.
\end{theorem}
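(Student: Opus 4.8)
The plan is to obtain Theorem~\ref{RX} as an immediate corollary of the two preceding results, so that essentially no new argument is needed; the task reduces to checking that the hypotheses align and then chaining the implications. The strategy is: use $\mathcal{R}(X)\leq 1$ to secure the local property (P) at every extreme point via Remark~\ref{remark RX}, and then feed this into the equivalence of Theorem~\ref{C-approximate left-symmetry} to conclude C-approximate symmetry.

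Concretely, I would proceed as follows. First, note that the standing assumption of Theorem~\ref{RX}, namely that $X$ is a finite-dimensional polyhedral Banach space, is precisely the setting in which both Remark~\ref{remark RX} and Theorem~\ref{C-approximate left-symmetry} apply, so both may be used verbatim. Next, I would invoke Remark~\ref{remark RX}: since $\mathcal{R}(X)\leq 1$, the local property (P) holds for each $x\in\mbox{Ext}\,B_X$. This is exactly condition $(d)$ in the list of equivalent statements of Theorem~\ref{C-approximate left-symmetry}. Finally, applying the equivalence $(d)\Leftrightarrow(a)$ of that theorem, the validity of $(d)$ forces $(a)$ to hold; that is, the Birkhoff-James orthogonality is C-approximately symmetric in $X$, which is the desired conclusion.

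As for the main difficulty: at the level of this particular statement there is essentially none, since all the substantive work has already been carried out in the cited results. The geometric content --- that a failure of the local property (P) at an extreme point produces a segment on $S_X$ long enough to force $\mathcal{R}(X)>1$ --- lives entirely in Remark~\ref{remark RX}, while the compactness and common-norming-functional arguments (in particular the passage to a subsequence with a constant set of supporting functionals, Lemma~\ref{same norming}) that promote the local left-symmetry of all extreme points to global C-approximate symmetry are contained in the proof of Theorem~\ref{C-approximate left-symmetry}. The only point meriting a moment's attention is the harmless notational discrepancy between $R(X)$ appearing in the statement and $\mathcal{R}(X)$ used in the definition; these denote the same constant, so no genuine gap arises.
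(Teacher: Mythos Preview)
Your proposal is correct and matches the paper's approach exactly: the paper presents Theorem~\ref{RX} as an immediate consequence of Remark~\ref{remark RX} and Theorem~\ref{C-approximate left-symmetry}, with no additional argument given. Your identification of the chain $\mathcal{R}(X)\leq 1 \Rightarrow$ local property~(P) at every extreme point $\Rightarrow$ condition~$(d)$ $\Leftrightarrow$ condition~$(a)$ is precisely what the authors intend.
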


\begin{remark}
The previous example shows that the converse of the above
theorem is not true. It follows also from \cite[Corollary 3.9]{CW1} that for any finite-dimensional Banach space $X$ (not necessarily polyhedral) if $\mathcal{R}(X)<1$, then the Birkhoff-James orthogonality is C-approximately symmetric. However, polyhedralness is essential for $\mathcal{R}(X)=1$. Indeed, consider the $l_2-l_{\infty}$ norm on the plane (see Figure 5) and vectors $x=(1,1)$, $y=(0,1)$. Then $x\bot_B y$ but $y\not\hspace{-0.3em}\bot_B^{\varepsilon} x$ for any $\varepsilon\in [0,1)$.
 
\begin{figure}[!htb]
\centering
\begin{tikzpicture}[scale=2]
\draw[dashed] (-2,0)--(2,0);
\draw[dashed] (0,-1.5)--(0,1.5);

\draw (-1,0) arc(180:90:1) (0,1);
\draw (0,-1) arc(270:360:1) ;

\draw (0,1)--(1,1)--(1,0);
\draw (-1,0)--(-1,-1)--(0,-1);
(1,0)
\draw[thick,->](0,0)--(1,1);
\draw[thick,->](0,0)--(0,1);

\node[right] at (0.5,0.5) {{ $x$}};
\node[left] at (0,0.5) {{ $y$}};

\node[below right] at (1,0) {{$1$}};
\node[below left] at (-1,0) {{$-1$}};
\node[above right] at (0,1) {{$1$}};
\node[below right] at (0,-1) {{ $-1$}};
\node at (0,-1.7) {Figure 5};

\end{tikzpicture}
\end{figure}
\end{remark}

\pagebreak

\section{C-approximate symmetry for two-dimensional polyhedral Banach spaces}

We introduce yet another property of a normed linear space $X$:
\begin{align*}
\mbox{if}~ \overline{xy}\subset S_X~\mbox{and}~ x\perp_B y,
~\mbox{then}~ x,y\in \mbox{Ext}\,B_X.\hspace{1cm} (\mbox{P1})
\end{align*}

We first prove that in any finite-dimensional polyhedral Banach space property (P1) always implies the local property (P) for each $x\in S_X$.

\begin{lemma}
Let $X$ be a finite-dimensional polyhedral Banach space such that property {\rm (P1)} holds for $X$. Then the local property {\rm (P)} holds for each $x\in S_X$.
 \end{lemma}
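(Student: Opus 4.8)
The plan is to argue by contradiction, so suppose the local property (P) fails for some $x\in S_X$. Then there is a $y\in x^{\perp}\cap\mathscr{A}(x)$, which by the definition of $\mathscr{A}(x)$ means that $y\in S_X$, $x\perp_B y$, and every $f\in J(y)$ satisfies $f\in J(x)$ or $-f\in J(x)$; that is, $J(y)\subseteq J(x)\cup(-J(x))$. The first step is a separation observation. In the finite-dimensional $X^*$ the sets $J(x)$ and $-J(x)$ are compact and disjoint, since a common element $f$ would force both $f(x)=1$ and $f(x)=-1$; hence they are separated. As $J(y)$ is convex, and therefore connected, it cannot meet both pieces, so either $J(y)\subseteq J(x)$ or $J(y)\subseteq -J(x)=J(-x)$.

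Next I would turn each inclusion into a segment lying on the unit sphere. If $J(y)\subseteq J(x)$, choose any $f\in J(y)$; then $f\in J(x)$ as well, so $f(x)=f(y)=1$, and for every $t\in[0,1]$ one has $1=f((1-t)x+ty)\leq\|(1-t)x+ty\|\leq 1$, whence $\overline{xy}\subset S_X$. Since $x\perp_B y$ and no nonzero vector is Birkhoff--James orthogonal to itself, $x\neq y$; thus property (P1) forces $x,y\in\mbox{Ext}\,B_X$. The case $J(y)\subseteq J(-x)$ is identical after replacing $x$ by $-x$: one obtains $\overline{(-x)y}\subset S_X$, and by homogeneity of $\perp_B$ one has $-x\perp_B y$ with $-x\neq y$, so (P1) gives $-x,y\in\mbox{Ext}\,B_X$.

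The final step uses that, for a polyhedral space, $B_X$ is a polytope and every extreme point of a polytope is exposed: for the vertex $y$ there is a functional $h\in J(y)$ with $h(z)<1$ for all $z\in B_X\setminus\{y\}$. In the case $J(y)\subseteq J(x)$ this $h$ also lies in $J(x)$, so $h(x)=1$, which by the uniqueness of the maximizer forces $x=y$, contradicting $x\neq y$; in the case $J(y)\subseteq J(-x)$ the same functional yields $-x=y$, again a contradiction. Either way the contradiction shows that the local property (P) holds for every $x\in S_X$. The routine parts are the norm estimates producing the on-sphere segment and the homogeneity bookkeeping for $-x$; the conceptual core, and the precise point where (P1) is indispensable, is the combination of the connectedness/separation dichotomy with the exposedness of vertices. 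Without extremeness the inclusion $J(y)\subseteq J(x)$ could perfectly well hold for $x\neq y$ (with $J(y)$ a proper sub-face of norming functionals), so the main obstacle is exactly the upgrade of $x,y$ to genuine extreme points, which is what (P1) supplies.
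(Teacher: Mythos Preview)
Your proof is correct and follows the same contradiction skeleton as the paper: from the failure of local (P) at some $x$ one extracts $y\in x^\perp\cap\mathscr{A}(x)$, argues that $\overline{xy}$ or $\overline{(-x)y}$ lies on $S_X$, and then invokes (P1). The execution differs in two respects. First, the paper opens by citing the earlier reduction theorem (local property (P) on $\mathrm{Ext}\,B_X$ implies it on all of $S_X$) so as to assume $x\in\mathrm{Ext}\,B_X$ from the outset, whereas you omit that reduction and let (P1) itself deliver extremeness of both $x$ and $y$, making your argument self-contained. Second, your closing contradiction is spelled out through exposedness of polytope vertices---an exposing functional $h\in J(y)$ forces $x=y$ or $-x=y$---while the paper ends with the terse sentence ``This contradicts that the local property (P) fails for $x$'' immediately after obtaining $y\in\mathrm{Ext}\,B_X$. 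Your connectedness/separation argument for the dichotomy $J(y)\subseteq J(x)$ versus $J(y)\subseteq J(-x)$ is likewise a clean addition that the paper leaves implicit under ``This clearly shows\ldots''.
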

 
\begin{proof}
It follows from Theorem~\ref{extreme points} that it is sufficient to prove that the local property (P) holds for each $x\in \mbox{Ext}\,B_X$. Suppose on the contrary that there exists $x\in \mbox{Ext}\,B_X$ such that the local property (P) fails for $x$. Thus there exists $y\in x^\perp\cap S_X$ such that if $f\in J(y)$, then either $f\in J(x)$ or $-f\in J(x)$. This clearly shows that either $\overline{xy}\subset S_X$ or $\overline{-xy}\subset S_X$. Now, property (P1) of $X$ shows that $y\in \mbox{Ext}\,B_X$. This contradicts that the local property (P) fails for $x$ and thus the result follows.
\end{proof}

We will see that the converse is also true in two-dimensional spaces but in general it is not true.
To exhibit this we now give an example of a three-dimensional
polyhedral Banach space $X$, for which the local property (P) holds for all elements of $S_X$ but $X$ fails to have the property (P1).

 \pgfmathsetmacro\angFuite{155}
 \pgfmathsetmacro\coeffReduc{1}
 \pgfmathsinandcos\sint\cost{\angFuite}

\begin{tikzpicture}[current plane/.estyle=%
        {cm={0.9,0,\coeffReduc*\cost,-\coeffReduc*\sint,(0,#1)}},x=0.5cm,y=0.5cm,z=0.3cm][scale=0.1]

\GraphInit[vstyle=Simple]
\begin{scope}[current plane=-3cm]
\SetGraphShadeColor{black!70}{white}{white}
\grEmptyCycle[Math,RA=3,prefix=a]{6}
\end{scope}

\begin{scope}[current plane=0 cm]
\SetGraphShadeColor{white}{white}{white}
\grEmptyCycle[Math,RA=6,prefix=b]{6}
\end{scope}
\begin{scope}[current plane=3 cm]
\SetGraphShadeColor{black!70}{white}{white}
\grEmptyCycle[Math,RA=3,prefix=c]{6}
\end{scope}
\begin{scope}[current plane=-5.5 cm]
\node(0,0,0) {Figure 6};
\end{scope}
\SetGraphShadeColor{white}{white}{black} \EdgeInGraphSeq{a}{0}{1}
\EdgeInGraphSeq{b}{0}{1} \EdgeInGraphSeq{c}{0}{1}
\EdgeIdentity*{a}{b}{0,1} \Edge(a0)(a5) \Edge(b0)(b5) \Edge(c0)(c5)
\Edge(b2)(a2) \Edge(c2)(b2)

\Edge(b5)(a5) \Edge(c5)(b5)
\Edge[style={opacity=.5}](b3)(a3)

\Edge[style={opacity=.5}](c3)(b3)
\Edge[style={opacity=.3}](b4)(a4)

\Edge[style={opacity=.3}](c4)(b4)

\Edge[style={opacity=.3}](b4)(b3)
\Edge[style={opacity=.3}](c4)(c3)
\Edge[style={opacity=.3}](a2)(a3)
\Edge[style={opacity=.3}](b2)(b3)
\Edge[style={opacity=.3}](c2)(c3)
\Edge[style={opacity=.3}](a4)(a3)
\Edge[style={opacity=.3}](b4)(b3)
\Edge[style={opacity=.3}](c4)(c3)
\Edge[style={opacity=.3}](a4)(a5)

\Edge[style={opacity=.3}](b4)(b5)
 \Edge[style={opacity=.3}](c4)(c5)
\EdgeIdentity*{c}{b}{0,1} \draw[dashed,->] (xyz cs:x=-11) -- (xyz
cs:x=11) node[above] {}; \draw[dashed, ->] (xyz cs:y=-10) -- (xyz
cs:y=10) node[right] {}; \draw[dashed,->] (xyz cs:z=-14) -- (xyz
cs:z=14) node[above] {};
\end{tikzpicture}

\begin{example}

Consider a three-dimensional polyhedral Banach space
$X=\mathbb{R}^3$, whose unit sphere is given in Figure 6.
Observe that the local property (P) holds for each $x\in S_X$. If we
consider any extreme point of $B_X$ which is black in color, then it
is orthogonal to all the elements which lie in the intersection of
$S_X $ and the plane passing through all the extreme points of $B_X$
which are white in color. This shows that $X$ fails to have property
(P1).

\end{example}

We now show that for any two-dimensional polyhedral Banach spaces the property (P1) and the local property (P) for each $x\in S_X$ are equivalent. The next result provides a complete characterization of the C-approximate symmetry of the Birkhoff-James orthogonality in a two-dimensional polyhedral Banach space.

\begin{theorem}\label{two dimensional polygonal equivalence}
Let $X$ be a two-dimensional polyhedral Banach space. Then the
following properties are equivalent:
\begin{itemize}
\item[(a)] the property {\rm (P1)} holds for $X$;
\item[(b)] if $x\in {\rm Ext}\,B_X$ and $y\in S_X$ are such that $x\perp_B y$, then $y$ does not lie
in the interior of any of the associated edges of $\pm x$;
\item[(c)] the local property {\rm (P)} holds for each $x\in S_X$;
\item[(d)] the Birkhoff-James orthogonality is C-approximately symmetric in
$X$.
\end{itemize}
\end{theorem}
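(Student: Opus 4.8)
The plan is to close the four equivalences by combining results already in hand with two short direct arguments that exploit the special geometry of a polygon. Since condition $(c)$ is by definition the statement that property (P) holds for $X$, the equivalence $(c)\Leftrightarrow(d)$ is immediate from Theorem~\ref{C-approximate left-symmetry}. Moreover $(a)\Rightarrow(c)$ is exactly the preceding lemma, which holds in every finite-dimensional polyhedral space. Thus it suffices to establish the cycle $(a)\Rightarrow(c)\Rightarrow(b)\Rightarrow(a)$, for which I only need to prove $(c)\Rightarrow(b)$ and $(b)\Rightarrow(a)$.

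For $(c)\Rightarrow(b)$ I argue by contraposition. If $(b)$ fails, there are $x\in\mbox{Ext}\,B_X$ and $y\in S_X$ with $x\perp_B y$ such that $y$ lies in the relative interior of an associated edge $E$ of $x$ or of $-x$. In two dimensions the facets are precisely the edges, so a relative interior point of an edge is a smooth point: $J(y)=\{f\}$, where $f$ is the supporting functional of $E$. If $E$ is adjacent to $x$, then $x\in E$ forces $f(x)=1$, hence $f\in J(x)$; if $E$ is adjacent to $-x$, then $f(-x)=1$, hence $-f\in J(x)$. In either case the unique element of $J(y)$ satisfies $\pm f\in J(x)$, so $y\in\mathscr{A}(x)$, and together with $x\perp_B y$ this gives $y\in x^\perp\cap\mathscr{A}(x)\neq\emptyset$, contradicting the local property (P) of $x$.

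For $(b)\Rightarrow(a)$, take $x,y\in S_X$ with $\overline{xy}\subset S_X$ and $x\perp_B y$; note $x\neq y$, since $x\perp_B x$ never holds for $x\neq 0$. Because $S_X$ is a polygon, a nondegenerate segment contained in $S_X$ cannot have a vertex in its relative interior (a vertex is an extreme point), so $\overline{xy}$ lies inside a single edge $E$, whose endpoints are extreme points. First I show $x$ is extreme: were $x$ in the relative interior of $E$, it would be smooth with $J(x)=\{f\}$, and $x\perp_B y$ would force $f(y)=0$, which is impossible since $f$ takes the value $1$ on all of $E$ and $y\in E$. Hence $x$ is an endpoint of $E$, so $E$ is an associated edge of $x$. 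If $y$ were in the relative interior of $E$, then $y$ would lie in the interior of an associated edge of the extreme point $x$ with $x\perp_B y$, contradicting $(b)$; thus $y$ is also an endpoint of $E$. Therefore $x,y\in\mbox{Ext}\,B_X$, which is property (P1), establishing $(a)$.

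I expect the subtle points to be the verification that a segment contained in $S_X$ lies in one edge and the consistent use of two-dimensionality. It is exactly the identification of facets with edges, so that relative interiors of edges consist of smooth points, that makes the converse of the preceding lemma available here; the three-dimensional example given above shows that this step genuinely fails without the dimension restriction. Once the cycle $(a)\Rightarrow(c)\Rightarrow(b)\Rightarrow(a)$ is in place, it shows $(a)$, $(b)$ and $(c)$ are equivalent, and combining with $(c)\Leftrightarrow(d)$ yields the full equivalence of $(a)$, $(b)$, $(c)$ and $(d)$.
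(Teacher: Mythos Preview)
Your proof is correct and follows essentially the same route as the paper. The paper proves the cycle $(a)\Rightarrow(b)\Rightarrow(c)\Rightarrow(a)$ directly, whereas you run the cycle as $(a)\Rightarrow(c)\Rightarrow(b)\Rightarrow(a)$, invoking the preceding lemma for $(a)\Rightarrow(c)$; your $(c)\Rightarrow(b)$ is simply the contrapositive of the paper's $(b)\Rightarrow(c)$, and your $(b)\Rightarrow(a)$ mirrors the paper's $(c)\Rightarrow(a)$ almost verbatim. If anything, your argument is slightly more explicit where the paper is terse: the paper asserts without comment that $\overline{xy}\subset S_X$ with $x\perp_B y$ forces $x\in\mbox{Ext}\,B_X$, while you supply the smoothness argument that justifies this.
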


\begin{proof}
Equivalence of $(c)$ and $(d)$ follows from
Theorem~\ref{C-approximate left-symmetry}. To complete the proof we
now prove the equivalence of $(a)$, $(b)$ and $(c)$.

We first prove $(a)\Rightarrow(b)$. Let $x\in\mbox{Ext}\,B_X$ and
$y\in x^\perp\cap S_X$. If $y$ does not lie on the associated edges
of $\pm x$, then the result follows trivially. Suppose that $y$ lies on
one of the associated edges of $x$ or $-x$. Then either
$\overline{xy}\subseteq S_X$ or $\overline{-xy}\subseteq S_X$. Now,
it follows from $(a)$ that $y\in \mbox{Ext}\,B_X$ and thus $(b)$
follows.

We now show $(b)\Rightarrow (c)$. Observe that to show $(c)$, it is
sufficient to show that the local property (P) holds for any
$x\in\mbox{Ext}\,B_X$. Let $x\in\mbox{Ext}\,B_X$ and let $y\in
x^\perp\cap S_X$. It follows from $(b)$ that $y$ does not lie in the
interior of any of the associated edges of $\pm x$. Thus there
exists some $f\in J(y)$ such that $\pm f\not\in J(x)$ and hence
$(c)$ follows.

Now, we prove that $(c)\Rightarrow (a)$. Let $x\in S_X$ and let
$y\in x^\perp\cap S_X$ be such that $\overline{xy}\subseteq S_X$
which implies that $x\in \mbox{Ext}\,B_X$. The local property (P) of
$x$ implies that $y$ cannot be an interior point of any of the
associated edges of $x$. Thus $y\in \mbox{Ext}\,B_X$ and $(a)$
follows. This completes the proof.
\end{proof}

\pagebreak

Applying Theorem~\ref{RX}, we now prove that in any two-dimensional regular
polyhedral Banach space with at least $6$ vertices, the Birkhoff-James orthogonality is C -approximately symmetric. Regularity here means that all edges of the unit sphere are of the same length with respect to the Euclidean metric and all the interior angles are of the same measure. 
\begin{theorem}\label{2n polygon}
Let $X$ be a two-dimensional regular polyhedral space with $2n$
vertices, $n\in \mathbb{N}$, $n\geq 3$. Then the Birkhoff-James
orthogonality is C-approximately symmetric in $X$.
\end{theorem}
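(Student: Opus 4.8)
The plan is to reduce everything to the criterion of Theorem~\ref{RX}: since $X$ is two-dimensional and polyhedral, it suffices to prove that $\mathcal{R}(X)\leq 1$. First I would observe that $S_X$ is a centrally symmetric regular $2n$-gon (central symmetry is automatic because the number of vertices $2n$ is even), so that $B_X$ is a legitimate unit ball. The only nondegenerate segments contained in $S_X$ are the subsegments of its edges, and among these the quantity $\|x-y\|$ is maximized by taking $x,y$ to be the two endpoints of a full edge, i.e.\ a pair of adjacent vertices. Moreover, the rotation by the central angle $\pi/n$ maps the polygon onto itself and is therefore a linear isometry of $(X,\|\ \|)$; it carries each edge to the next, so all edges have the same $X$-length. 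Consequently $\mathcal{R}(X)=\|v-w\|$ for any single pair $v,w$ of adjacent vertices, and the whole problem collapses to evaluating this one number.

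To compute it I would fix Euclidean coordinates with circumradius $1$ and vertices $v_k=(\cos\frac{k\pi}{n},\sin\frac{k\pi}{n})$, $k=0,\ldots,2n-1$, and examine $v_0-v_1$. A half-angle computation gives
\[
v_0-v_1=2\sin\tfrac{\pi}{2n}\,\bigl(\sin\tfrac{\pi}{2n},\,-\cos\tfrac{\pi}{2n}\bigr),
\]
so this vector has Euclidean length $2\sin\frac{\pi}{2n}$ and points in the direction of angle $-\frac{\pi}{2}+\frac{\pi}{2n}$. Since $\|v_0-v_1\|$ equals the ratio of this Euclidean length to the Euclidean distance from the origin to $\partial B_X$ in that direction, the key step is to locate precisely where the ray from the origin through $v_0-v_1$ meets the boundary of the polygon.

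Here the parity of $n$ enters, and this is the step I expect to be the main obstacle. Comparing the direction angle with the vertex angles $\frac{k\pi}{n}$ shows that the ray points exactly at a vertex when $n$ is odd and exactly at an edge-midpoint when $n$ is even. In the odd case the boundary distance equals the circumradius $1$, giving $\mathcal{R}(X)=2\sin\frac{\pi}{2n}$; in the even case it equals the apothem $\cos\frac{\pi}{2n}$, giving $\mathcal{R}(X)=2\tan\frac{\pi}{2n}$. Because $\sin$ and $\tan$ are increasing on $[0,\frac{\pi}{2})$ and $n\geq 3$, one obtains $2\sin\frac{\pi}{2n}\leq 2\sin\frac{\pi}{6}=1$ in the odd case and $2\tan\frac{\pi}{2n}\leq 2\tan\frac{\pi}{8}<1$ in the even case, so $\mathcal{R}(X)\leq 1$ in every case and Theorem~\ref{RX} yields the assertion. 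It is worth stressing why the case split is unavoidable: the uniform estimate $\mathcal{R}(X)\leq 2\tan\frac{\pi}{2n}$ already fails for the hexagon $n=3$, where $2\tan\frac{\pi}{6}=2/\sqrt{3}>1$. It is precisely the fact that for odd $n$ the difference $v_0-v_1$ points toward a \emph{far} vertex rather than a \emph{near} edge-midpoint that rescues the borderline configuration, namely the regular hexagon, which realizes the extremal value $\mathcal{R}(X)=1$.
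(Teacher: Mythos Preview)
Your proof is correct and follows essentially the same strategy as the paper's: both reduce to Theorem~\ref{RX} by showing $\mathcal{R}(X)\leq 1$, compute the $X$-length of an edge by comparing its Euclidean length $2\sin\frac{\pi}{2n}$ to the radial distance from the origin to $\partial B_X$ in the edge-direction, and split according to the parity of $n$ (vertex versus edge-midpoint on that ray). Your version is a bit more explicit---you give the exact value of $\mathcal{R}(X)$ in each case and justify via the rotational isometry that all edges have equal $X$-length---whereas the paper chooses coordinates so that the edge vector is vertical and organizes the final estimate as $n=3$ versus $n\geq 4$.
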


\begin{proof}

Without loss of generality we assume that all the vertices of the polygon lie on the Euclidean unit sphere and they are: 
$$v_j=\left(\cos\frac{(2j-1)\pi}{2n},\sin\frac{(2j-1)\pi}{2n}\right),\qquad 
j=1,\ldots,2n
$$ 
(for $n=3$ and $n=4$ the situation is illustrated in Figures 7
and 8).

Let the ordinate meet the boundary of the polygon at $\pm(0,\beta)$. Then $\beta=1$ if $n$ is odd and $\beta=\cos{\frac{\pi}{2n}}\geq\cos\frac{\pi}{8}>0.9$ if $n$ is even.

Let $L=\overline{v_1v_{2n}}$. The value of $\mathcal{R}(X)$ is equal to the length of $L$. 
To determine the latter one we will translate $L$ by the vector $-v_{2n}$, which means that $v_{2n}$ is moved to the origin $o$ and $v_1$ to some point $u$ on the ordinate. 

The Euclidean length of $L$ is equal to $|L|=2\sin\frac{\pi}{2n}$. Then $|L|=1$ for $n=3$ and $|L|\leq 2\sin\frac{\pi}{8}<0.8<\beta$ for $n\geq 4$. Therefore $u$ lies on the boundary of the polygon for $n=3$ and inside it for $n\geq 4$, hence the length of $L$ (with respect to the introduced norm) is not greater than 1. 
Thus we have $\mathcal{R}(X)\leq 1$ and the assertion follows now from Theorem~\ref{RX}.

\bigskip

\hspace{1cm}
\begin{tikzpicture}[scale=0.9]
\newdimen\R
\R=2cm
\hspace{-0.3cm}
\draw (30:\R) \foreach \x in {30,90,150,210,270,330} {  --
(\x:\R) }
 -- cycle (330:\R) 
 -- cycle (270:\R) 
      -- cycle (210:\R) node[below] {$v_4$}
 -- cycle (150:\R) 
-- cycle  (90:\R)  -- cycle  (90:\R)  -- cycle  (30:\R);

\draw [fill] (0,2) circle [radius=.07]; \draw [fill] (0,-2)
circle [radius=.07]; \draw [fill] (0,2) circle [radius=.08];

\draw [fill] (0,0) circle [radius=.07];

\draw [fill] (1.72,1) circle [radius=.07]; \draw [fill]
(1.72,-1) circle [radius=.07]; \draw [fill] (-1.72,1) circle
[radius=.07]; \draw [fill] (-1.71,-1) circle [radius=.07];

\coordinate (A) at (0,2); \coordinate (B) at (1.73,1);  

\coordinate (D) at (0,-2); \draw [add= 0.35 and 0.35,black, dashed]
(A) to (D); \coordinate (E) at (-1.32,.75);  \coordinate (F) at
(1.32,-.75); 

\coordinate (G) at (-2,0); \coordinate (H) at (2,0); \draw
[add= .4 and .4, black,dashed] (G) to (H);

\draw (2.5,1) node{$v_1$};
\draw (-1.8,1) node{$v_3$};

\draw (2.5,-1) node{$v_6$};

\draw (0.8,2) node{$v_2$};
\draw (0.8,-2) node{$v_5$};
\draw (0,2) node{$u$};
\draw (2.25,.3)node {$L$};
\draw (-0.1,-.3)node {$o$};

\draw (0,-4) node {Figure 7};



\draw (9.35,.74)--(9.35,-.74) node{};

\draw (5.65,.74)--(5.65,-.74) node{};

\draw (8.24,1.85)--(9.35,.74) node{};

\draw (8.24,-1.85)--(6.76,-1.85) node{};

\draw (8.24,1.85)--(6.76,1.85) node{};

\draw (5.65,.74)--(6.76,1.85) node{};

\draw (5.65,-.74)--(6.76,-1.85) node{};

\draw (9.35,-.74)--(8.24,-1.85) node{};

\draw [fill] (9.35,.74) circle [radius=.07];

\draw [fill] (5.65,.74) circle [radius=.07];

\draw [fill] (5.65,-.74) circle [radius=.07];

\draw [fill] (9.35,-.74) circle [radius=.07];

\draw [fill] (8.24,1.85) circle [radius=.07];

\draw [fill] (6.76,-1.85) circle [radius=.07];

\draw [fill] (6.76,1.85) circle [radius=.07];

\draw [fill] (8.24,-1.85) circle [radius=.07];

\draw [fill] (7.5,0) circle [radius=.07];

\draw [fill] (7.5,1.48) circle [radius=.07];

\coordinate (A) at (9.35,.74); \coordinate (B) at (9.35,-.74); 

\coordinate (C) at (8.24,1.85);  

\coordinate (D) at (7.5,.965);  \coordinate (E) at (7.5,-.965);

\draw[add= 1.3 and 1.3, black, dashed] (D) to (E);

\coordinate (F) at (6.535,.965); \coordinate (G) at (8.465,-.965);

\coordinate (H) at (9.5,0); \coordinate (I) at (5.5,0); \draw
[add=0.38 and 0.4, black, dashed] (H) to (I);

\draw [fill] (7.5,1.84) circle [radius=.07];

\draw [fill] (7.5,-1.84) circle [radius=.07];
\draw (10,.74) node{$v_1$};
\draw (10,-.74) node{$v_8$};

\draw (5.4,.74) node{$v_4$};

\draw (5.4,-.74) node{$v_5$}; 
\draw (6.5,1.85) node{$v_3$};
\draw (9,1.9) node{$v_2$};
\draw (8.9,-1.85) node{$v_7$};
\draw (6.5,-1.85) node{$v_6$};
\draw (9.95,.3)node {$L$};
\draw (7.4,-.3)node {$o$};
\draw (7.4,1.48)node {$u$};
\draw (8,2.2)node {\tiny$(0,\beta)$};
\draw (8,-2.2)node {\tiny$(0,-\beta)$};
\draw (7.5,-4) node {Figure 8};
\end{tikzpicture}
\end{proof}

The following example shows that in Theorem~\ref{2n polygon}, the
regularity condition cannot be avoided. Consider a two-dimensional
polyhedral Banach space $X=\mathbb{R}^2$, whose unit sphere is given
on Figure 9 below. 

\begin{center}

\begin{tikzpicture}[scale=0.9]
\draw (2,0.5) -- (0,2);
\draw (0,2) -- (-2,0.5);
\draw (-2,-.5) -- (-2,0.5);
\draw (-2,-.5) -- (0,-2);
\draw (0,-2) -- (2,-0.5);
\draw(2,-.5)--(2,0.5);

\draw[dashed,blue] (-2,3.5) -- (4,-1);
\draw[dashed,blue] (-3,2.25) -- (3,-2.25);

\draw[dashed,red] (2,3.5) -- (-4,-1);

\draw [fill] (2,0.5) circle [radius=.08];
\draw [fill] (0,-2) circle [radius=.08];
\draw [fill] (0,2) circle [radius=.08];
\draw [fill] (-2,0.5) circle [radius=.08];
\draw [fill] (-2,-.5) circle [radius=.08];
\draw [fill] (2,-.5) circle [radius=.08];
\draw [fill] (-1.333333333,1) circle [radius=.08];
\draw [fill] (1.333333333,-1) circle [radius=.08];

\draw (2.4,0.5)  node {$v_1$};
\draw (-2.4,0.5)  node {$v_3$};
\draw (-2.4,-0.5)  node {$v_4$};
\draw (2.4,-0.5)  node {$v_6$};
\draw (0.5,2)  node {$v_2$};
\draw (0.5,-2)  node {$v_5$};

\draw [dashed](0,-3.5)--(0,3.5);
\draw[dashed](-3.5,0)--(3.5,0);

\draw(-1.333,1.3) node{$y$}; \draw(1.95,-1.1) node{$-y$};
\draw(4,-0.5) node{$f$}; \draw(3.4,-2) node{$\mbox{ker}~f$};
\draw(-4,-0.5) node{$g$};

\draw(0,-4) node{Figure~9};
\end{tikzpicture}
\end{center}
Clearly, $y\in v_2^\perp\cap S_X$. Also, $y$ lies
in the interior of the edge $\overline{v_2v_3}$. The only supporting
linear functional for $y$ is the supporting functional $g\in
S_{X^*}$ corresponding to the edge $\overline{v_2v_3}$ such that
$g(x)=1$ for all $x\in\overline{v_2v_3}$. Thus $v_2$ is not
C-approximately left-symmetric.

\section{Acknowledgments}
A joint work on this article began in January 2020, during the first author's visit
to the Jadavpur University in Kolkata. The
hospitality of Professor Kallol Paul is acknowledged with gratitude.
Dr. Debmalya Sain feels elated to acknowledge the motivating
presence of his beloved brother Krittish Roy in his life. The
research of Dr. Divya Khurana and Dr. Debmalya Sain is sponsored by
Dr. D. S. Kothari Postdoctoral Fellowship under the mentorship of
Professor Gadadhar Misra.

\bibliographystyle{amsplain}

\end{document}